\def\bfA{\boldsymbol{A}} 
\def\bfB{\boldsymbol{B}} 
\def\bfC{\boldsymbol{C}} 
\def\bfE{\boldsymbol{E}} 
\def\bfG{\boldsymbol{G}} 
\def\bfH{\boldsymbol{H}} 
\def\bfI{\boldsymbol{I}} 
\def\bfK{\boldsymbol{K}} 
\def\bfL{\boldsymbol{L}} 
\def\bfM{\boldsymbol{M}}
\def\bfP{\boldsymbol{P}} 
\def\bfR{\boldsymbol{R}} 
\def\bfS{\boldsymbol{S}} 
\def\bfT{\boldsymbol{T}} 
 \def\bfu{\boldsymbol{u}}
\def\bfV{\boldsymbol{V}} 
\def\bfW{\boldsymbol{W}} 
\def\bfX{\boldsymbol{X}} \def\bfx{\boldsymbol{x}}
 \def\bfy{\boldsymbol{y}}
\def\bfZ{\boldsymbol{Z}} \def\bfz{\boldsymbol{z}}
\def\bfnull{\boldsymbol{0}}
\def\Ars{\boldsymbol{\widetilde{A}}_{q}}
\def\Brs{\boldsymbol{\widetilde{B}}_{q}}  \def\Bsn{\boldsymbol{\widetilde{B}}}
\def\Crs{\boldsymbol{\widetilde{C}}_{q}}  \def\Csn{\boldsymbol{\widetilde{C}}}
\def\Ers{\boldsymbol{\widetilde{E}}_{q}}
\def\Grs{\boldsymbol{\widetilde{G}}_{q}} 
\def\Bs{\boldsymbol{B}_{\bot}}          
\def\Bss{\boldsymbol{B}_{\bot}^T}
\def\Bsv{\boldsymbol{B}_{\bot,\bfE\bfV}}
\def\Ph{\boldsymbol{\widehat{P}}}
\def\H2{\mathcal{H}_2}
\def\Zs{\widetilde{\boldsymbol{Z}}} 
\def\Real{\boldsymbol{T}_R} 
\def\bfsigma{\boldsymbol{\sigma}}
\def\bflambda{\boldsymbol{\lambda}}
\DeclareMathOperator*{\trace}{trace}
\DeclareMathOperator*{\Rank}{rank}
\DeclareMathOperator*{\colsp}{span}
\begin{document}

\date{\today}

%%%%%%%%%%%%%%%%%%%%%%%%%%%%%%%%%%%%%%%%%%%%%%%%%%%%%%%%%%%%%%%%%%%%%%%%%%%%%%%%%%%%%%%%%%
\title{The ADI iteration for Lyapunov equations implicitly performs $\mathcal{H}_2$ pseudo-optimal model order reduction}

\author{Thomas Wolf\footnotemark[2]\ \ and Heiko K. F. Panzer\footnotemark[2]\ \footnotemark[3]}

\maketitle

\renewcommand{\thefootnote}{\fnsymbol{footnote}}
\footnotetext[2]{Institute of Automatic Control,
				Technische Universit\"at M\"unchen,
				Boltzmannstr. 15, D-85748 Garching (\textbraceleft \texttt{thomas.wolf,panzer}\textbraceright\texttt{@tum.de})}
\footnotetext[3]{Partially supported by the Cusanuswerk Scholarship Award Programme, which is gratefully acknowledged.}
\renewcommand{\thefootnote}{\arabic{footnote}}

\begin{abstract}
Two approaches for approximating the solution of large-scale Lyapunov equations are considered: the \emph{alternating direction implicit} (ADI) iteration and projective methods by Krylov subspaces.
A link between them is presented by showing that the ADI iteration can always be identified by a Petrov-Galerkin projection with rational block Krylov subspaces.
Then a unique Krylov-projected dynamical system can be associated with the ADI iteration,
which is proven to be an $\H2$ pseudo-optimal approximation.
This includes the generalization of previous results on $\H2$ pseudo-optimality to the multivariable case.
Additionally, a low-rank formulation of the residual in the Lyapunov equation is presented, which is well-suited for implementation, and which yields a measure of the ``obliqueness'' that the ADI iteration is associated with.

\textit{Keywords:} Lyapunov equation, alternating direction implicit method, model order reduction, rational Krylov subspace, $\H2$ optimality
\end{abstract}

%%%%%%%%%%%%%%%%%%%%%% SECTION %%%%%%%%%%%%%%%%%%%%%%%%%
\section{Introduction}
The generalized algebraic Lyapunov equation takes the form
\begin{equation}\label{eq:lyap}
	\bfA\bfP\bfE^T + \bfE\bfP\bfA^T +\bfB\bfB^T = \bfnull,
\end{equation}
with $\bfA,\bfE,\bfP \in \mathbb{R}^{n \times n}$ and $\bfB \in \mathbb{R}^{n \times m}$.
We assume $n$ to be large and $m$ small, $m \ll n$.
The matrix $\bfE$ can be arbitrary of full rank, det$(\bfE) \ne 0$, because in large-scale settings it is often reasonable to work with the general version \eqref{eq:lyap} of the Lyapunov equation.
The standard formulation results from setting $\bfE = \bfI$, where $\bfI$ denotes the identity matrix.
The Lyapunov equation \eqref{eq:lyap} plays an important role in the analysis and order reduction of linear time invariant dynamical systems
\begin{equation}\label{eq:sys}
\begin{aligned}
	\bfE \dot{\bfx}(t) &= \bfA\bfx(t) + \bfB \bfu(t), \\
	\bfy(t) &= \bfC \bfx(t),
\end{aligned}
\end{equation}
where $\bfx(t) \in \mathbb{R}^{n}$, $\bfu(t) \in \mathbb{R}^{m}$ and $\bfy(t) \in \mathbb{R}^{p}$ denote the states, inputs and outputs of the system, respectively, and $\bfC \in \mathbb{R}^{p \times n}$.
With the usual abuse of notation, let $\bfG(s) = \bfC \left( s\bfE - \bfA \right)^{-1}\bfB$ denote the transfer function of system \eqref{eq:sys} in the Laplace domain as well as the dynamical system itself.
The solution $\bfP$ of \eqref{eq:lyap} represents the \emph{Controllability Gramian} of $\bfG(s)$, see \cite{antoulas} for details.

Well-established methods for the direct (also called dense) solution of \eqref{eq:lyap} are available in matrix computation software, \cite{Bartels_Stewart,Hammarling,direct_lyap}.
However, for large $n$, their evaluation becomes time-consuming or might even fail due to shortage of RAM.
A remedy is to apply iterative methods that take advantage of the sparsity of the matrices $\bfA$, $\bfE$ and $\bfB$, to compute low-rank approximations $\Ph \approx \bfP$.
This low-rank formulation also allows the efficient storage of the approximation $\Ph$ even in large-scale settings.

A prevalent method for the approximate solution of \eqref{eq:lyap} is the \emph{alternating directions implicit} (ADI) iteration, which was adapted to large-scale systems by a low-rank formulation in \cite{Li_ADI,penzl_smith}.
A comprehensive analysis of ADI is available in the two theses \cite{diss_saak,diss_sabino}, whereas recent results can be found e.\,g. in \cite{paed_MCMDS,real_ADI,Saak_2012_MathMod}.

Another way for approximating $\bfP$ is to project \eqref{eq:lyap} to reasonable order $q \ll n$ and solve the resulting reduced Lyapunov equation by direct methods.
Krylov subspaces are typically used for this projection, see e.\,g. \cite{Jaimoukha_Krlyov_Lyap,Jbilou_Krylov_Lyap,newlya}.
If \emph{rational} Krylov subspaces are employed, this procedure is called \emph{rational Krylov subspace method} (RKSM) \cite{Druskin_2011}.
Several aspects of RKSM are analyzed in \cite{Beckermann_2011,Breiten_ADI,Druskin_Convergence,Wolf_2013_ACC}.
If particular sets of shifts are used for both ADI and RKSM, it can be shown that the resulting approximations $\Ph$ are equal \cite{Druskin_Convergence,flagg_ADI}.

The authors of \cite{Druskin_2011} restrict themselves to orthogonal projections;
however, oblique projections can be directly incorporated into the framework of RKSM, which introduces a new degree of freedom.
As we will show, this degree of freedom in RKSM can always be chosen such that the resulting approximation $\Ph$ is equal to the one of the ADI iteration---irrespective of the choice of shifts.

Accordingly, we present the missing link of both methods: we show how the ADI solution can be obtained by (oblique) projections with Krylov subspaces.
This generalizes the connection of ADI with RKSM from \cite{Druskin_Convergence,flagg_ADI} to arbitrary shifts.
Due to this link, a reduced order model by a Krylov-based projection of \eqref{eq:sys} can be associated to the ADI iteration.
We prove that this reduced system is unique and that it is an $\H2$ pseudo-optimal reduction of \eqref{eq:sys}.
Besides a better understanding of ADI, the new link allows to carry over results on Krylov-based projections.
As a first application, we adapt \cite{Wolf_2013_ACC} to ADI and present a new low-rank formulation of the residual (with maximum rank $m$), 
which significantly reduces the numerical effort in the evaluation of stopping criteria in the low-rank ADI iteration.
It furthermore allows to efficiently compute the angle between the oblique projection---that the ADI iteration is associated with---and the orthogonal projection---that RKSM usually performs.

%%%%%%%%%%%%%%%%%%%%%% SECTION %%%%%%%%%%%%%%%%%%%%%%%%%
\section{Preliminaries and problem formulation}
In this section relevant preliminaries are reviewed.
Throughout the paper we assume the system \eqref{eq:sys} to be asymptotically stable, which means that the set of eigenvalues of the matrix $\bfE^{-1}\bfA$ lies in the open left half of the complex plane.
Then the solution $\bfP$ of the Lyapunov equation \eqref{eq:lyap} is unique and positive semi-definite, $\bfP = \bfP^T \ge \bfnull$; it is positive definite, if in addition, system \eqref{eq:sys} is controllable, \cite{antoulas}.
Methods for the approximate solution of \eqref{eq:lyap} try to find a positive semi-definite matrix $\Ph \in \mathbb{R}^{n \times n}$ of rank $q$, with $q \ll n$, such that $\Ph \approx \bfP$.

\subsection{Alternating directions implicit (ADI) iteration}\label{sec:ADIrew}
In the  basic ADI iteration, the user chooses a sequence of complex shifts $(\sigma_1,\sigma_2,\ldots,\sigma_k )$ (here, the $\sigma_i \in \mathbb{C}$ are chosen in the right half of the complex plane)
and an initial approximation  $\Ph_0$ (e.\,g. $\Ph_0  =\bfnull$).
For the case $\bfE = \bfI$, the ADI approximation $\Ph$ is determined by the following iteration:
\begin{equation}\label{eq:basicADI}
	\begin{aligned} 
		\left( \bfA - \sigma_i\bfI \right) \Ph_{i-\frac{1}{2}} &= -\bfB\bfB^T -
		\Ph_{i-1}\left( \bfA^T - \sigma_i\bfI \right),  \\
		\left( \bfA - \sigma_i\bfI \right) \Ph_{i}^T &= -\bfB\bfB^T -
		\Ph_{i-\frac{1}{2}}\left( \bfA^T - \sigma_i\bfI \right).
	\end{aligned}
\end{equation}
Li et.\,al. observed in \cite{Li_ADI}, that for the choice $\Ph_0  =\bfnull$ the $k$-th iterate of \eqref{eq:basicADI} can be reformulated as a low-rank factor $\Ph = \bfZ\bfZ^*$, where ${}^*$ denotes transposition with complex conjugation.
With the generalization to arbitrary $\bfE \ne \bfI$, the ADI based solution factor $\bfZ = [\bfZ_1, \, \ldots, \, \bfZ_k ]$ is then given by:
\begin{equation}\label{eq:LRADI}
	\begin{aligned} 
		\bfZ_1 &= \sqrt{2\, \textrm{Re} (\sigma_1) }  \left( \bfA - \sigma_1\bfE \right)^{-1} \bfB, \\
		\bfZ_{i} &= \sqrt{\frac{ \textrm{Re} (\sigma_{i})}{ \textrm{Re} (\sigma_{i-1}) } }
		\left( \bfI + ( \sigma_{i} + \bar{\sigma}_{i-1} )    \left( \bfA - \sigma_{i}\bfE \right)^{-1}\bfE \right) \bfZ_{i-1}, \qquad\, i=2,\ldots,k.
	\end{aligned}
\end{equation}
For the ease of presentation, we assume that the set $\bfsigma = \{\sigma_1, \ldots, \sigma_k \} $ contains distinct shifts, $\sigma_i \ne \sigma_j$, $i\ne j$.
However, the results of this work are also valid without this assumption.
A matrix $\bfZ \in \mathbb{C}^{n \times q}$ constructed by the low-rank ADI iteration \eqref{eq:LRADI} will be referred to as the \emph{ADI basis} in the following; its column dimension is $q = k \cdot m$.
As the matrices $\bfE$, $\bfA$ and $\bfB$ are real, in ``almost every practical situation'' \cite{flagg_ADI} one would choose the set $\bfsigma$ to be closed under complex conjugation.
Then there has to exist a non-singular matrix $\Real \in \mathbb{C}^{q \times q}$ such that $\Zs=\bfZ\Real^{-1} \in \mathbb{R}^{n \times q}$ is a real ADI basis.
The Lyapunov approximation $\Ph$ then is given by $\bfZ\bfZ^* = \Zs\Real\Real^*\Zs^T$.
In \cite{real_ADI}, an analytical expression for $\Real$ is presented, which is used to slightly modify the iteration \eqref{eq:LRADI} for directly computing a real ADI basis.

For ADI (and also RKSM), the set $\bfsigma$ has to be chosen a priori or by iterative procedures. 
Several works are available on the choice of shifts; see e.\,g. \cite{penzl_smith,H2_gugercin,Druskin_2011,trac_eid_09} to mention just a few of them.

\subsection{Rational Krylov subspace method (RKSM)}\label{sec:Krylov}
It was shown in \cite{Li_ADI}, that the ADI basis $\bfZ$ spans the rational input block Krylov subspace $\mathcal{K}(\bfA,\bfB,\bfsigma)$ defined as:
\begin{equation}\label{eq:Krylov_s}
	\mathcal{K}(\bfA,\bfB,\bfsigma) := \colsp\left\{ (\bfA-\sigma_1\bfE)^{-1}\bfB, \, \ldots, \,(\bfA-\sigma_k\bfE)^{-1}\bfB \right\}.
\end{equation}
Let the matrix $\bfV \in \mathbb{C}^{n \times q}$ denote an arbitrary basis of this subspace:
$\colsp(\bfV) \!=\! \mathcal{K}(\bfA,\!\bfB,\!\bfsigma)$.
As the set $\bfsigma$ is assumed to be closed under complex conjugation, a real basis $\bfV \in \mathbb{R}^{n \times q}$ of the subspace \eqref{eq:Krylov_s} can be computed \cite{grimme}.
Due to numerical reasons, this basis $\bfV$ is usually chosen orthogonal: $\bfV^T\bfV = \bfI$, which is typically done by an Arnoldi approach \cite{antoulas}.
However, from a theoretical point of view, we do not require orthogonality here. 
Throughout the paper we shall assume that all directions in the rational block Krylov subspace \eqref{eq:Krylov_s} are linearly independent, which means that the dimension of the subspace is $q = k \cdot m$.

The basic idea of RKSM is to use projections onto Krylov subspaces $\bfV$.
Let $\bfW \in \mathbb{R}^{n \times q}$ be arbitrary of appropriate dimensions, then the reduced matrices from a \emph{Petrov--Galerkin} projection read as $\bfA_q = \bfW^T\bfA\bfV$, $\bfE_q = \bfW^T\bfE\bfV$ and $\bfB_q = \bfW^T\bfB$.
They define a reduced Lyapunov equation
\begin{equation}\label{eq:Pr}
	\bfA_q\bfP_q\bfE_q^T + \bfE_q\bfP_q\bfA_q^T +\bfB_q\bfB_q^T = \bfnull,
\end{equation}
which then can be cheaply solved by direct methods for $\bfP_q \in \mathbb{R}^{q \times q}$.
The approximation of RKSM then is given by $\Ph = \bfV\bfP_q\bfV^T$.

Please note, that RKSM was introduced in \cite{Druskin_2011} with a \emph{Galerkin} projection $\bfW := \bfV$.
However---as the basic procedure is left unchanged---we still refer to the generalized method $\bfW \ne \bfV$ as RKSM.

Also note that the approximation $\Ph$ is invariant to coordinate changes \cite{Wolf_2013_ACC}.
This means that only the column span of $\bfV$ affects the approximation $\Ph$ while the chosen basis is irrelevant.
Therefore, fixing the set $\bfsigma$ uniquely defines $\colsp(\bfV)$, and the only remaining degree of freedom in $\Ph$ from RKSM is the choice of $\bfW$.

\subsection{Contributions of this work}
We assume the set $\bfsigma$ arbitrary but fixed and apply it to both ADI and RKSM.
As shown in \cite{Li_ADI}, the ADI basis $\bfZ$ and the Krylov basis $\bfV$ then span the same subspace: $\colsp(\bfV) = \colsp(\bfZ)$.
Our first contribution is an alternative proof of this fact, which is based on rephrasing the ADI iteration \eqref{eq:LRADI} into a Sylvester equation.
This will be the starting point for our main contributions.

As  $\colsp(\bfV) = \colsp(\bfZ)$ holds, there exists a nonsingular matrix $\bfT \in \mathbb{C}^{q \times q}$ such that $\bfZ=\bfV\bfT$.
Comparing the ADI approximation $\Ph = \bfZ\bfZ^* = \bfV\bfT\bfT^*\bfV^T$ with the one of RKSM $\Ph = \bfV\bfP_q\bfV^T$, leads to the following interpretation:
if there exists a link between ADI and RKSM, then $\bfP_q := \bfT\bfT^*$ should solve a reduced Lyapunov equation \eqref{eq:Pr}.
Concerning the degrees of freedom in RKSM, this leads to the following question:

\emph{Is there a projection matrix $\bfW$ such that the resulting reduced Lyapunov equation \eqref{eq:Pr} is solved by $\bfT\bfT^*$; or in other words, is there a reduced Lyapunov equation \eqref{eq:Pr} that can be associated to the ADI iteration?}

We will confirm this in \S\ref{sec:ADIH2}, i.\,e.~prove the existence of $\bfW$,
for which ADI and RKSM yield the same approximation: $\Ph = \bfZ\bfZ^* = \bfV\bfP_q\bfV^T$.
Additionally, we prove that the associated reduced system matrices are unique and that they define an $\H2$ pseudo-optimal approximation of the original model $\bfG(s)$.

Due to this connection, the knowledge on Krylov-based projections can be transferred to ADI, which leads to our third contribution:
a numerically efficient computation and storage of the ADI residual and furthermore, a measure of the ``obliqueness'', i\,.e. the angle between the orthogonal and oblique projection.

%%%%%%%%%%%%%%%%%%%%%% SECTION %%%%%%%%%%%%%%%%%%%%%%%%%
\section{Analysis of ADI iteration and rational Krylov subspace method}\label{sec:main}
In this section the contributions of this work are presented.

\subsection{ADI basis spans a Krylov subspaces}
Our contributions require the following two lemmas, which give an alternative proof of $\colsp(\bfV) \!=\! \colsp(\bfZ)$.
First a new Sylvester equation is constructed, whose solution is the ADI basis $\bfZ$.
This reveals a new and alternative look on the ADI iteration \eqref{eq:LRADI} and facilitates its analysis.
\begin{lemma}\label{lem:ADI_Syl}
	Let $\alpha_i := \sqrt{2\,\textrm{Re}(\sigma_{i})}$, let $\bfI$ denote the identity matrix of dimension $m \times m$ and define
	\begin{equation}\label{eq:S_ADI}
		\bfS_{\text{ADI}} = \left[\begin{array}{cccc}
		\sigma_1\bfI & \alpha_1\alpha_2\bfI & \cdots & \alpha_1\alpha_l\bfI \\
		& \ddots & \ddots & \vdots \\ & & \ddots & \alpha_{l-1}\alpha_l\bfI \\ & & & \sigma_l\bfI \end{array} \right] \quad \text{and} \quad
		\bfL_{\text{ADI}} := \left[\alpha_1\bfI, \, \ldots,\, \alpha_k\bfI\right].
	\end{equation}
	Then the ADI basis $\bfZ$ from the iteration \eqref{eq:LRADI} solves the Sylvester equation
	\begin{equation}\label{eq:ADISyl}
		\bfA\bfZ - \bfE\bfZ \bfS_{\text{ADI}} = \bfB \bfL_{\text{ADI}}.
	\end{equation}
\end{lemma}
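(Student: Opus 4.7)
The plan is to verify \eqref{eq:ADISyl} block-column by block-column and then prove the resulting column identities by induction on $i$. Writing $\bfZ = [\bfZ_1, \ldots, \bfZ_k]$ and exploiting the upper-triangular block structure of $\bfS_{\text{ADI}}$ (with $(j,i)$-block $\alpha_j\alpha_i\bfI$ for $j<i$ and $\sigma_i\bfI$ on the diagonal) together with the $i$-th block $\alpha_i\bfI$ of $\bfL_{\text{ADI}}$, the $i$-th block-column of \eqref{eq:ADISyl} is equivalent to
\begin{equation*}
(\bfA - \sigma_i\bfE)\,\bfZ_i \;=\; \alpha_i\bfB \;+\; \alpha_i\,\bfE\sum_{j=1}^{i-1}\alpha_j\bfZ_j, \qquad i = 1,\ldots,k,
\end{equation*}
with the convention that the sum is empty when $i=1$.

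The base case $i=1$ reduces to $(\bfA-\sigma_1\bfE)\bfZ_1 = \alpha_1\bfB$, which is precisely the first line of \eqref{eq:LRADI}. For the inductive step $i\ge 2$, I start from the second line of \eqref{eq:LRADI} and multiply by $(\bfA-\sigma_i\bfE)$. The prefactor $(\sigma_i+\bar{\sigma}_{i-1})\bfE$ produced by the recurrence combines with $-\sigma_i\bfE$ to give the cancellation
\begin{equation*}
(\bfA-\sigma_i\bfE) \;+\; (\sigma_i+\bar{\sigma}_{i-1})\bfE \;=\; \bfA+\bar{\sigma}_{i-1}\bfE,
\end{equation*}
so that $(\bfA-\sigma_i\bfE)\bfZ_i = (\alpha_i/\alpha_{i-1})(\bfA+\bar{\sigma}_{i-1}\bfE)\bfZ_{i-1}.$

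To match the target column identity, the key algebraic observation is $\sigma_{i-1}+\bar{\sigma}_{i-1} = 2\,\textrm{Re}(\sigma_{i-1}) = \alpha_{i-1}^{2}$, which permits the splitting
\begin{equation*}
\bfA+\bar{\sigma}_{i-1}\bfE \;=\; (\bfA-\sigma_{i-1}\bfE) \;+\; \alpha_{i-1}^{2}\,\bfE.
\end{equation*}
Applying the induction hypothesis to $(\bfA-\sigma_{i-1}\bfE)\bfZ_{i-1}$ yields $\alpha_{i-1}\bfB + \alpha_{i-1}\bfE\sum_{j=1}^{i-2}\alpha_j\bfZ_j$, and the extra $\alpha_{i-1}^{2}\bfE\bfZ_{i-1} = \alpha_{i-1}\bfE\cdot\alpha_{i-1}\bfZ_{i-1}$ is exactly the missing $j=i-1$ summand. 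Multiplying through by $\alpha_i/\alpha_{i-1}$ then collapses to the desired identity and closes the induction.

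I do not expect a serious obstacle; the only slightly delicate point is keeping track of the two interacting algebraic identities $\sigma_i+\bar{\sigma}_{i-1}$ (arising from the ADI recurrence) and $\sigma_{i-1}+\bar{\sigma}_{i-1} = \alpha_{i-1}^{2}$ (needed to reshape the induction hypothesis into the form prescribed by $\bfS_{\text{ADI}}$). Once these are lined up correctly, the bookkeeping of the telescoping sum over $j$ is essentially a two-line calculation per induction step, and the explicit square-root prefactors in \eqref{eq:LRADI} are seen to be engineered precisely so that they produce the symmetric coefficients $\alpha_j\alpha_i$ of $\bfS_{\text{ADI}}$.
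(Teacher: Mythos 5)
Your proof is correct and follows essentially the same route as the paper's: decompose the Sylvester equation into the per-block-column identities $(\bfA-\sigma_i\bfE)\bfZ_i = \alpha_i\bigl(\bfB + \bfE\sum_{j<i}\alpha_j\bfZ_j\bigr)$ and verify them by induction, using the ADI recurrence together with the key identity $\sigma+\bar\sigma=\alpha^2$. The only cosmetic difference is that you run the inductive step from $i-1$ to $i$ whereas the paper phrases it from $i$ to $i+1$; the algebra (cancellation to $\bfA+\bar\sigma\bfE$, then splitting off the $(\bfA-\sigma\bfE)$ term) is identical.
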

\begin{proof}
	From the Sylvester equation \eqref{eq:ADISyl} and the definitions \eqref{eq:S_ADI} it directly follows that
	\begin{equation}\label{eq:Z_iter}
		\left(\bfA - \sigma_i \bfE\right) \bfZ_i =
		\alpha_i \left( \sum_{j=1}^{i-1} \alpha_j\bfE\bfZ_j + \bfB \right).
	\end{equation}
	We prove the equivalence of the ADI iteration and \eqref{eq:Z_iter} by induction.
	Obviously, $\bfZ_1$ in \eqref{eq:Z_iter} is equal to the one of the ADI iteration \eqref{eq:LRADI}.
	Now assume that $\bfZ_i$ from \eqref{eq:LRADI} is given by \eqref{eq:Z_iter} and substitute $-\sigma_i = \bar{\sigma}_i + \alpha_i^2$. Then \eqref{eq:Z_iter} becomes
	\begin{equation}\label{eq:Z_iter2}
		\left(\bfA + \bar{\sigma}_i \bfE\right) \bfZ_i =
		\alpha_i \left( \sum_{j=1}^{i} \alpha_j\bfE\bfZ_j + \bfB \right).
	\end{equation}
	which is equivalent to
	\begin{equation}\label{eq:Z_iter3}
		\left(\bfA \!-\! \sigma_{i+1} \bfE\right)
		\left[ \bfI \!+\! (\sigma_{i+1} \!+\! \bar{\sigma}_i) \left(\bfA \!-\! \sigma_{i+1}\bfE \right)^{-1}\!\bfE \right] \! \bfZ_i \!=\! 
		\alpha_i \!\left( \sum_{j=1}^{i} \alpha_j\bfE\bfZ_j \!+\! \bfB \right).
	\end{equation}
	Using $\left[ \bfI \!+\! (\sigma_{i+1} \!+\! \bar{\sigma}_i) \left(\bfA \!-\! \sigma_{i+1}\bfE \right)^{-1}\!\bfE \right] \! \bfZ_i = \frac{\alpha_i}{\alpha_{i+1}}\bfZ_{i+1}$ from \eqref{eq:LRADI}, shows that \eqref{eq:Z_iter} is true for $\bfZ_{i+1}$, which completes the proof by induction.
	\hfill
\end{proof}

The result $\colsp(\bfV) \!=\! \colsp(\bfZ)$, originally given in \cite{Li_ADI}, now directly follows by the duality of Krylov subspaces and the solutions of Sylvester equations \cite{Gallivan_2004}.
In that sense, the following proof is simpler than the original one in \cite{Li_ADI}.
\begin{lemma}\label{lem:ADI_Krylov}
	The ADI basis $\bfZ$ from the iteration \eqref{eq:LRADI} spans a rational input block Krylov subspace $\mathcal{K}(\bfA,\bfB,\bfsigma)$.
\end{lemma}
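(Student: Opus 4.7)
The plan is simply to combine Lemma \ref{lem:ADI_Syl} with the classical duality between rational block Krylov subspaces and the column span of Sylvester equation solutions due to \cite{Gallivan_2004}. Lemma \ref{lem:ADI_Syl} already supplies
\[
    \bfA\bfZ - \bfE\bfZ\bfS_{\text{ADI}} = \bfB\bfL_{\text{ADI}},
\]
so essentially no calculation remains---one only has to verify that the hypotheses of the duality theorem hold for $(\bfS_{\text{ADI}},\bfL_{\text{ADI}})$ and then read off the Krylov structure from the spectrum of $\bfS_{\text{ADI}}$.

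First I would observe that $\bfS_{\text{ADI}}$ is block upper triangular with diagonal blocks $\sigma_i\bfI$, so its spectrum is $\bfsigma=\{\sigma_1,\ldots,\sigma_k\}$, each eigenvalue having algebraic multiplicity $m$. Since the $\sigma_i$ lie in the open right half-plane while the eigenvalues of $\bfE^{-1}\bfA$ lie in the open left half-plane, the spectra of $\bfS_{\text{ADI}}$ and of the pencil $(\bfE,\bfA)$ are disjoint, so $\bfZ$ is the unique solution of the Sylvester equation. Second, I would check that $(\bfL_{\text{ADI}},\bfS_{\text{ADI}})$ is observable in the sense needed by the duality result; using the block-triangular form of $\bfS_{\text{ADI}}$, the positivity of the $\alpha_i$, and the assumption that the shifts are distinct, a blockwise Hautus-type argument handles this. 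With both hypotheses in place, the duality theorem identifies $\colsp(\bfZ)$ with the rational input block Krylov subspace whose shifts are the eigenvalues of $\bfS_{\text{ADI}}$, which is precisely $\mathcal{K}(\bfA,\bfB,\bfsigma)$ from \eqref{eq:Krylov_s}.

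The main obstacle I anticipate is the observability verification, because each shift $\sigma_i$ appears in $\bfS_{\text{ADI}}$ with multiplicity $m$, so the usual scalar Hautus test does not apply verbatim. One must exploit the block-triangular structure and the fact that distinct shifts yield decoupled generalized eigenspaces of $\bfS_{\text{ADI}}$, each of which is detected by $\bfL_{\text{ADI}}$ thanks to $\alpha_i>0$. Once this is cleared, the conclusion is essentially a citation.
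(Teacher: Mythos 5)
Your proof is correct and follows essentially the same route as the paper: Lemma~\ref{lem:ADI_Syl} supplies the Sylvester equation, and the duality between Sylvester-equation solutions and rational block Krylov subspaces from \cite{Gallivan_2004} (also \cite{Wolf_2012_MathMod}) then identifies $\colsp(\bfZ)$ with $\mathcal{K}(\bfA,\bfB,\bfsigma)$ via the spectrum of $\bfS_{\text{ADI}}$. The paper states this in two sentences and delegates the side conditions entirely to the cited references, whereas you explicitly flag the two hypotheses that must be checked---disjointness of $\Lambda(\bfS_{\text{ADI}})$ from $\Lambda(\bfE,\bfA)$ for uniqueness of $\bfZ$, and observability of the pair $(\bfL_{\text{ADI}},\bfS_{\text{ADI}})$---and sketch how the block-triangular structure, the positivity of the $\alpha_i$, and the distinctness of the shifts deliver them. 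That extra diligence is sound and does not change the argument; it just surfaces what the paper treats as implicit in its citations.
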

\begin{proof}
	Lemma~\ref{lem:ADI_Syl} proves that $\bfZ$ spans a Krylov subspace and that the expansion points correspond to the eigenvalues of $\bfS_{\text{ADI}}$, \cite{Gallivan_2004,Wolf_2012_MathMod}.
	The eigenvalues of $\bfS_{\text{ADI}}$ directly follow from \eqref{eq:S_ADI}, which proves that $\bfZ$ spans the rational input block Krylov subspace $\mathcal{K}(\bfA,\bfB,\bfsigma)$.
	\hfill
\end{proof}

\subsection{Interpretation of ADI as a rational Krylov subspace method}\label{sec:ADI_RKSM}
The main contributions of this work build upon the following theorem.
\begin{theorem}[\cite{flagg_ADI}]\label{thm:gugercin}
	Given a set $\bfsigma = \{\sigma_1, \ldots, \sigma_k \} $ of distinct shifts that is closed under complex conjugation, let $\bfZ \in \mathbb{C}^{n \times q}$ be the basis resulting from the ADI iteration \eqref{eq:LRADI},
	and $\bfV \in \mathbb{R}^{n \times q}$ be an arbitrary real basis of the rational input block Krylov subspace $\mathcal{K}(\bfA,\bfB,\bfsigma)$.
	Let $\bfP_q \in \mathbb{R}^{q \times q}$ solve the projected Lyapunov equation
	\begin{equation}
		\bfA_q\bfP_q\bfE_q^T + \bfE_q\bfP_q\bfA_q^T +\bfB_q\bfB_q^T = \bfnull,
	\end{equation}
	with $\bfA_q := \bfV^T\bfA\bfV$, $\bfE_q := \bfV^T\bfE\bfV$ and $\bfB_q := \bfV^T\bfB$. 
	Then  $\Ph := \bfV\bfP_q\bfV^T = \bfZ\bfZ^*$, if and only if $\bfE_q^{-1}\bfA_q$ is diagonalizable with $k$ distinct eigenvalues $\bflambda = \{-\sigma_1, \ldots, -\sigma_k \}$, and each eigenvalue $\lambda_i = -\sigma_i$ has multiplicity $m$.
\end{theorem}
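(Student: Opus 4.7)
My plan is to reduce the equivalence to a single algebraic identity obtained by projecting the Sylvester equation of Lemma~\ref{lem:ADI_Syl} and then to analyze the reduced Lyapunov residual that results when one plugs in $\bfT\bfT^{*}$. By Lemma~\ref{lem:ADI_Krylov}, $\colsp(\bfV)=\colsp(\bfZ)$, so there is a unique invertible $\bfT\in\mathbb{C}^{q\times q}$ with $\bfZ=\bfV\bfT$. Since $\bfV$ has full column rank, $\Ph=\bfV\bfP_{q}\bfV^{T}=\bfZ\bfZ^{*}$ is equivalent to $\bfP_{q}=\bfT\bfT^{*}$. Premultiplying the Sylvester equation $\bfA\bfZ-\bfE\bfZ\bfS_{\text{ADI}}=\bfB\bfL_{\text{ADI}}$ by $\bfV^{T}$ and using $\bfZ=\bfV\bfT$ yields the reduced Sylvester equation $\bfA_{q}\bfT-\bfE_{q}\bfT\bfS_{\text{ADI}}=\bfB_{q}\bfL_{\text{ADI}}$.

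The key algebraic observation is the identity $\bfS_{\text{ADI}}+\bfS_{\text{ADI}}^{*}=\bfL_{\text{ADI}}^{*}\bfL_{\text{ADI}}$, which follows directly from \eqref{eq:S_ADI}: the $(i,i)$-diagonal block of both sides equals $2\,\mathrm{Re}(\sigma_{i})\bfI=\alpha_{i}^{2}\bfI$, and the $(i,j)$-off-diagonal block equals $\alpha_{i}\alpha_{j}\bfI$. Substituting $\bfP_{q}=\bfT\bfT^{*}$ into the reduced Lyapunov residual and using the reduced Sylvester equation to eliminate $\bfA_{q}\bfT$ and $\bfT^{*}\bfA_{q}^{T}$, together with this identity, the residual collapses into the perfect square $\bfM\bfM^{*}$, where $\bfM:=\bfE_{q}\bfT\bfL_{\text{ADI}}^{*}+\bfB_{q}$. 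Hence $\bfT\bfT^{*}$ solves the reduced Lyapunov equation if and only if $\bfM=\bfnull$, i.e.\ $\bfE_{q}^{-1}\bfB_{q}=-\bfT\bfL_{\text{ADI}}^{*}$.

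For the ``only if'' direction, plugging $\bfE_{q}^{-1}\bfB_{q}=-\bfT\bfL_{\text{ADI}}^{*}$ back into the reduced Sylvester equation and using $\bfS_{\text{ADI}}-\bfL_{\text{ADI}}^{*}\bfL_{\text{ADI}}=-\bfS_{\text{ADI}}^{*}$ gives $\bfE_{q}^{-1}\bfA_{q}=-\bfT\bfS_{\text{ADI}}^{*}\bfT^{-1}$. The matrix $\bfS_{\text{ADI}}^{*}$ is block triangular with pairwise distinct scalar diagonal blocks $\bar\sigma_{i}\bfI$, hence diagonalizable with eigenvalues $\{\bar\sigma_{i}\}$, each of multiplicity $m$; conjugate closure of $\bfsigma$ then yields the claimed spectrum $\{-\sigma_{i}\}$ for $\bfE_{q}^{-1}\bfA_{q}$. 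For the converse, observe that the reduced Sylvester equation is exactly the characterization of Lemma~\ref{lem:ADI_Syl} applied to the reduced triple $(\bfA_{q},\bfE_{q},\bfB_{q})$ with the same shift set $\bfsigma$; hence $\bfT$ is the low-rank ADI basis for the reduced system and $\bfT\bfT^{*}$ is its ADI approximation to the true reduced Gramian $\bfP_{q}$. The standard ADI error representation reads $\bfP_{q}-\bfT\bfT^{*}=\mathcal{C}(\bfE_{q}^{-1}\bfA_{q})\,\bfP_{q}\,\mathcal{C}(\bfE_{q}^{-1}\bfA_{q})^{*}$ with $\mathcal{C}(x)=\prod_{i=1}^{k}(x+\bar\sigma_{i})(x-\sigma_{i})^{-1}$. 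Under the hypothesis, the minimal polynomial of $\bfE_{q}^{-1}\bfA_{q}$ is $\prod_{i}(x+\sigma_{i})$, which by conjugate closure equals $\prod_{i}(x+\bar\sigma_{i})$; hence the numerator of $\mathcal{C}$ annihilates $\bfE_{q}^{-1}\bfA_{q}$ and $\bfT\bfT^{*}=\bfP_{q}$, so $\Ph=\bfZ\bfZ^{*}$.

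The main obstacle is spotting the identity $\bfS_{\text{ADI}}+\bfS_{\text{ADI}}^{*}=\bfL_{\text{ADI}}^{*}\bfL_{\text{ADI}}$ and arranging the residual into the perfect-square form $\bfM\bfM^{*}$; once that is in hand, the forward direction reduces to a short computation. The subtle point in the converse is that the spectral hypothesis only fixes the similarity class of $\bfE_{q}^{-1}\bfA_{q}$, not the similarity matrix, so the equality $\bfE_{q}^{-1}\bfA_{q}=-\bfT\bfS_{\text{ADI}}^{*}\bfT^{-1}$ cannot be read off directly; invoking the ADI error formula (which relies only on the minimal polynomial matching $\prod_{i}(x+\bar\sigma_{i})$) is what cleanly closes this gap.
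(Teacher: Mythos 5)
The paper does not prove Theorem~\ref{thm:gugercin} itself --- it is quoted from \cite{flagg_ADI} and used as a black box --- so there is no in-paper proof to compare against. Your proposal is a self-contained argument and, as far as I can tell, it is correct; it is also nicely aligned with the machinery the paper develops elsewhere. In particular, projecting the Sylvester equation of Lemma~\ref{lem:ADI_Syl} to obtain $\bfA_q\bfT - \bfE_q\bfT\bfS_{\text{ADI}} = \bfB_q\bfL_{\text{ADI}}$ and then exploiting $\bfS_{\text{ADI}} + \bfS_{\text{ADI}}^{*} = \bfL_{\text{ADI}}^{*}\bfL_{\text{ADI}}$ to collapse the reduced Lyapunov residual into $\bfM\bfM^{*}$ is exactly the computation the paper itself performs (at the full-order level, with $^T$ as a typo for $^*$) in the proof of Theorem~\ref{thm:res}; you have essentially observed that the same factorization, applied at the reduced level, immediately settles the ``only if'' direction and produces the explicit similarity $\bfE_q^{-1}\bfA_q = -\bfT\bfS_{\text{ADI}}^{*}\bfT^{-1}$. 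Two small points are worth making explicit. First, the step from ``$\bfM\bfM^{*}=\bfnull$'' to ``$\bfP_q=\bfT\bfT^{*}$'' quietly uses that once $\bfM=\bfnull$ the matrix $\bfE_q^{-1}\bfA_q$ is similar to $-\bfS_{\text{ADI}}^{*}$, hence Hurwitz, so the reduced Lyapunov operator is invertible and its solution is unique; without that observation, having $\bfT\bfT^{*}$ solve \eqref{eq:Pr} would not force it to coincide with the given $\bfP_q$. Second, for the ``if'' direction you import the ADI error representation $\bfP_q - \bfT\bfT^{*} = \mathcal{C}(\bfE_q^{-1}\bfA_q)\,\bfP_q\,\mathcal{C}(\bfE_q^{-1}\bfA_q)^{*}$; this is standard but is not derived in the paper, so in a fully self-contained write-up you should either cite it or include its short proof (it also relies on the identification of $\bfT$ as the \emph{unique} solution of the reduced Sylvester equation, which holds because $\bfS_{\text{ADI}}$ and $\bfE_q^{-1}\bfA_q$ have disjoint spectra under the hypothesis). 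You correctly flag the genuine subtlety of the converse --- that the spectral hypothesis fixes only the similarity class of $\bfE_q^{-1}\bfA_q$ and not the transform --- and the error-formula route, which needs only the minimal polynomial, is a clean way around it. Overall this is a sound alternative proof that stays within the Sylvester-equation framework the paper builds in Lemmas~\ref{lem:ADI_Syl}--\ref{lem:ADI_Krylov}.
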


The theorem states that the approximations $\Ph$ of ADI and the orthogonal RKSM are equal, if and only if the eigenvalues of the projected matrix $\bfE_q^{-1}\bfA_q$ are the mirror images of the shifts $\bfsigma$, with respect to the imaginary axis.
Obviously, this condition is not true for arbitrary sets $\bfsigma$.
It can be fulfilled only for very particular sets $\bfsigma$.
However, such a set is previously unknown, and only an iterative procedure can be used to compute it.
Yet this iterative method has in general no guarantee to converge and is often numerically expansive, see \cite{flagg_ADI}.
This is why the theorem is mainly of theoretical interest and less of practical relevance.

Our aim is to give a constructive result, i.\,e. we want to show, how the eigenvalues can always be enforced at the mirror images of the shifts.
That means, we generalize the result of Theorem~\ref{thm:gugercin} to arbitrary sets $\bfsigma$.
Towards this aim, we use the following observation:
Although not explicitly stated in \cite{flagg_ADI}, Theorem~\ref{thm:gugercin} is still valid if the orthogonally projected matrices are substituted with obliquely projected ones, i.\,e. with $\bfA_q := \bfW^T\bfA\bfV$, $\bfE_q := \bfW^T\bfE\bfV$ and $\bfB_q := \bfW^T\bfB$, where $\bfW \in \mathbb{R}^{n \times q}$ is arbitrary.
The following theorem shows that with the additional degree of freedom $\bfW$, the condition in Theorem~\ref{thm:gugercin} can be achieved for arbitrary sets of shifts.
\begin{theorem}\label{thm:main}
	Given a set $\bfsigma = \{\sigma_1, \ldots, \sigma_k \} $ of distinct shifts that is closed under complex conjugation, let $\bfZ \in \mathbb{C}^{n \times q}$ be the basis resulting from the ADI iteration \eqref{eq:LRADI},
	and $\bfV \in \mathbb{R}^{n \times q}$ be an arbitrary real basis of the rational input block Krylov subspace $\mathcal{K}(\bfA,\bfB,\bfsigma)$.
	Let $\bfP_q \in \mathbb{R}^{q \times q}$ solve the projected Lyapunov equation
	\begin{equation}\label{eq:red_lyap_Thm}
		\bfA_q\bfP_q\bfE_q^T + \bfE_q\bfP_q\bfA_q^T +\bfB_q\bfB_q^T = \bfnull,
	\end{equation}
	with $\bfA_q := \bfW^T\bfA\bfV$, $\bfE_q := \bfW^T\bfE\bfV$ and $\bfB_q := \bfW^T\bfB$. 
	Then, there exists a matrix $\bfW \in \mathbb{R}^{n \times q}$, such that $\Ph := \bfV\bfP_q\bfV^T = \bfZ\bfZ^*$, and the associated explicit form  $\bfE_q^{-1}\bfA_q$, $\bfE_q^{-1}\bfB_q$ of the reduced system is unique.
\end{theorem}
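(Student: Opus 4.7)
The plan is to reformulate the conclusion as a residual--orthogonality condition and then exploit a low-rank factorization of the Lyapunov residual at $\Ph = \bfZ\bfZ^*$, which turns both existence and uniqueness into algebraic consequences of a single structural identity.

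By Lemma~\ref{lem:ADI_Krylov} there is a unique nonsingular $\bfT \in \mathbb{C}^{q\times q}$ with $\bfZ = \bfV\bfT$. Since $\bfV$ has full column rank, $\bfV\bfP_q\bfV^T = \bfZ\bfZ^*$ is equivalent to $\bfP_q = \bfT\bfT^*$, a real symmetric positive semidefinite matrix. Right-multiplying the Sylvester equation of Lemma~\ref{lem:ADI_Syl} by $\bfT^{-1}$ yields $\bfA\bfV - \bfE\bfV\bfS = \bfB\bfL$ with $\bfS := \bfT\bfS_{\text{ADI}}\bfT^{-1}$ (spectrum still $\bfsigma$) and $\bfL := \bfL_{\text{ADI}}\bfT^{-1}$; projecting with $\bfW^T$ gives the free identity $\bfA_q = \bfE_q\bfS + \bfB_q\bfL$ for every $\bfW$. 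Substituting $\bfP_q = \bfT\bfT^*$ and this relation into~\eqref{eq:red_lyap_Thm} and regrouping, the requirement reduces to $\bfW^T\bfR\bfW = \bfnull$, where
\begin{equation*}
 \bfR := \bfA\Ph\bfE^T + \bfE\Ph\bfA^T + \bfB\bfB^T,\qquad \Ph = \bfZ\bfZ^*,
\end{equation*}
is the residual of the original Lyapunov equation.

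The pivotal technical step is the low-rank factorization
\begin{equation*}
 \bfR \;=\; \bfB_{\bot}\,\bfB_{\bot}^{*},\qquad \bfB_{\bot}:=\bfE\bfZ\,\bfL_{\text{ADI}}^{T} + \bfB,
\end{equation*}
obtained by expanding $\bfR$ through the Sylvester equation of Lemma~\ref{lem:ADI_Syl} and then invoking the structural identity
\begin{equation*}
 \bfS_{\text{ADI}} + \bfS_{\text{ADI}}^{*} \;=\; \bfL_{\text{ADI}}^{T}\bfL_{\text{ADI}},
\end{equation*}
which is immediate from~\eqref{eq:S_ADI} together with $\sigma_i + \bar{\sigma}_i = \alpha_i^2$. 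Hence $\Rank \bfR \le m$, and the condition $\bfW^T\bfR\bfW = \bfnull$ becomes the linear constraint $\bfB_{\bot}^{T}\bfW = \bfnull$; an elementary dimension argument, using that the orthogonal complement of $\bfB_{\bot}$ has dimension at least $n-m \gg q$, produces a real $\bfW$ in this complement with $\bfW^{T}\bfE\bfV$ additionally nonsingular, which proves existence.

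For the uniqueness of the reduced realization, set $\widetilde{\bfA}:=\bfE_q^{-1}\bfA_q = \bfS + \widetilde{\bfB}\bfL$ and $\widetilde{\bfB}:=\bfE_q^{-1}\bfB_q$. The standardized Lyapunov equation $\widetilde{\bfA}\bfP_q + \bfP_q\widetilde{\bfA}^{T} + \widetilde{\bfB}\widetilde{\bfB}^{T} = \bfnull$, together with the Sylvester relation, gives after completing the square
\begin{equation*}
 \bigl(\widetilde{\bfB} + \bfP_q\bfL^{T}\bigr)\bigl(\widetilde{\bfB} + \bfP_q\bfL^{T}\bigr)^{T} \;=\; \bfP_q\bfL^{T}\bfL\bfP_q - \bfS\bfP_q - \bfP_q\bfS^{T}.
\end{equation*}
Conjugating the structural identity by $\bfT$ (equivalently, verifying in the real ADI basis $\Zs = \bfZ\Real^{-1}$ that $\bfP_q^{-1}$ solves the Lyapunov equation $\bfP_q^{-1}\bfS + \bfS^{T}\bfP_q^{-1} = \bfL^{T}\bfL$) shows that the right-hand side vanishes. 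Therefore $\widetilde{\bfB} = -\bfP_q\bfL^{T}$ and $\widetilde{\bfA} = \bfS - \bfP_q\bfL^{T}\bfL$ depend only on $\bfV$ and $\bfsigma$, and so they are identical for every admissible $\bfW$. The main anticipated obstacle is the bookkeeping that pushes the complex-conjugate identity $\bfS_{\text{ADI}} + \bfS_{\text{ADI}}^{*} = \bfL_{\text{ADI}}^{T}\bfL_{\text{ADI}}$ through the possibly complex change of basis $\Real$ into the real PSD rank-$m$ form of $\bfR$ and into the vanishing of the right-hand side above; the payoff is that one algebraic identity simultaneously governs existence, the rank-$m$ form of the residual (useful also for stopping criteria and the ``obliqueness'' measure developed later), and the uniqueness of the reduced realization.
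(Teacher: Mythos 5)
Your proof is correct, and it takes a genuinely different route from the paper's. The paper reduces Theorem~\ref{thm:main} to Theorem~\ref{thm:gugercin} (the Flagg--Gugercin eigenvalue condition), casts the problem as pole placement via the parametrization $\bfE_q^{-1}\bfA_q = \bfS + \bfE_q^{-1}\bfB_q\bfL$, invokes observability of $(\bfS,\bfL)$ for existence of the ``feedback,'' an external result of O'Reilly for its uniqueness under the multiplicity constraint, and finally a dimension count for the realizing $\bfW$. You instead bypass Theorem~\ref{thm:gugercin} entirely: you observe that $\bfW^T\bfR\bfW$ is identically the left-hand side of the reduced Lyapunov equation with $\bfP_q = \bfT\bfT^*$, pull in the rank-$m$ residual factorization $\bfR = \bfB_\bot\bfB_\bot^*$ (which the paper establishes only later as Theorem~\ref{thm:res}, using the same structural identity $\bfS_{\text{ADI}}+\bfS_{\text{ADI}}^* = \bfL_{\text{ADI}}^T\bfL_{\text{ADI}}$ -- note the paper's equation preceding \eqref{eq:res5} writes $\bfS_{\text{ADI}}^T$ where $\bfS_{\text{ADI}}^*$ is meant), turn the requirement into the linear constraint $\bfB_\bot^T\bfW = \bfnull$, and settle existence by a comparable dimension argument. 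Your uniqueness proof is particularly nice: completing the square in the reduced Lyapunov equation and using the conjugated identity $\bfP_q^{-1}\bfS + \bfS^T\bfP_q^{-1} = \bfL^T\bfL$ kills the right-hand side and forces $\widetilde{\bfB} = -\bfP_q\bfL^T$, $\widetilde{\bfA} = \bfS - \bfP_q\bfL^T\bfL$, giving the explicit PORK-type formulas rather than appealing to an abstract pole-placement uniqueness result. What the paper's route buys is brevity by standing on cited results; what yours buys is self-containment, an explicit reduced realization, and an early unification with Theorem~\ref{thm:res}. Two small points you gloss over but which do follow from your formulas: (i) you need $\bfT\bfT^*$ to be \emph{the} solution of \eqref{eq:red_lyap_Thm}, not merely \emph{a} solution, which requires the reduced pencil to be uniquely solvable -- this holds since your identity gives $\widetilde{\bfA} = -\bfP_q\bfS^T\bfP_q^{-1}$, whose eigenvalues are the mirrored shifts $-\sigma_i$ in the open left half-plane; and (ii) the choice of $\bfW$ must additionally make $\bfE_q = \bfW^T\bfE\bfV$ nonsingular, which you assert as generic and which is also left implicit in the paper.
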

\begin{proof}
	Assuming distinct shifts $\sigma_i\ne\sigma_j$, $i\ne j$, we have to show due to Theorem~\ref{thm:gugercin}, that there exists a $\bfW$ such that $\bfE_q^{-1}\bfA_q$ is diagonalizable with $k$ distinct eigenvalues $\bflambda = \{-\sigma_1, \ldots, -\sigma_k \}$, and each eigenvalue $\lambda_i = -\sigma_i$ has multiplicity $m$.
	Towards this aim we use the parametrization of all possible projected system matrices from \cite{Astolfi_2010,Wolf_2012_MathMod}:
	\begin{equation}\label{eq:S3}
		\bfE_q^{-1}\bfA_q = \bfS + \bfE_q^{-1}\bfB_q\bfL,
	\end{equation}
	where $\bfS \in \mathbb{R}^{q \times q}$ and $\bfL \in \mathbb{R}^{m \times q}$ are fixed for a given basis $\bfV$.
	Because $\bfV$ is a basis of a block Krylov subspace, the pair $(\bfS,\bfL)$ is observable and $\bfS$ is diagonalizable with the eigenvalues $\bfsigma = \{\sigma_1, \ldots, \sigma_k \} $, each with multiplicity $m$ \cite{Gallivan_2004,Wolf_2012_MathMod}.
	Now consider \eqref{eq:S3} as a pole-placement problem: we are searching for the ``feedback'' $\bfE_q^{-1}\bfB_q$, such that the eigenvalues of $\bfS$ are mirrored along the imaginary axis.
	Because the pair $(\bfS,\bfL)$ is observable, there exists a feedback $\bfE_q^{-1}\bfB_q$, that places all eigenvalues at the desired location.
	Due to multiplicity $m$ of the eigenvalues and \cite[Corollary $8$]{reilly_dof}, the desired feedback is unique, which shows that the desired matrices $\bfE_q^{-1}\bfA_q$ and $\bfE_q^{-1}\bfB_q$, such that $\bfV\bfP_q\bfV^T = \bfZ\bfZ^*$, are unique.
	It is left to show that there exists a $\bfW$, such that $\bfE_q^{-1}\bfB_q = (\bfW^T\bfE\bfV)^{-1}\bfW^T\bfB$ becomes the desired feedback, which is equivalent to $\bfW^T (\bfB - \bfE\bfV\bfE_q^{-1}\bfB_q ) = \bfnull$.
	Therefore, it is sufficient to show existence of a $\bfW \in \mathbb{R}^{n \times q}$ with $\colsp(\bfW) \subset \colsp[\bfE\bfV\; \bfB]$.
	By defining $\bfW := [\bfE\bfV\; \bfB]\bfK$ with $\bfK \in \mathbb{R}^{(q+m) \times q}$, this reads as $\bfK^T\bfM = \bfnull$, with $\bfM = [\bfE\bfV\; \bfB]^T (\bfB - \bfE\bfV\bfE_q^{-1}\bfB_q ) \in \mathbb{R}^{(q+m)\times m}$.
	That means, we are searching for a $q$-dimensional subspace that is orthogonal to an $m$-dimensional subspace in a $(q+m)$-dimensional space, which obviously exists.
	\hfill
\end{proof}

\begin{rem}
	This theorem---and thus also Theorem~\ref{thm:gugercin}---can be directly generalized to multiple shifts in the set $\bfsigma \!=\! \{\sigma_1, \ldots, \sigma_k \} $.
	Then the proof would basically not change, because one would have to show that there exists a $\bfE_q^{-1}\bfA_q$ with the eigenvalues $\lambda_i = -\sigma_i$, $i=1,\ldots,k$, each with geometric multiplicity $m$, and that the Jordan blocks to each eigenvalue have equal dimension.
	One can show that $\bfS$ in	\eqref{eq:S3} fulfills this, and with the same argument as above the result follows.
	The details of this proof, however, are omitted for a concise presentation.
	This generalization is of importance, because in a typical setting, one cyclically reuses an a priori chosen set of shifts in the ADI iteration, leading to multiple shifts in the set $\bfsigma$.
\end{rem}

	Theorem~\ref{thm:main} generalizes the results of \cite{Druskin_Convergence,flagg_ADI} in the following way:
	Instead of being restricted to particular sets of shifts $\bfsigma$ that fulfill the condition in Theorem~\ref{thm:gugercin}, the equivalence of ADI and RKSM can always be enforced for arbitrary sets $\bfsigma$, by using oblique projections in RKSM.
	This shows that the ADI iteration implicitly solves a particular projected Lyapunov equation---irrespective of the choice of shifts.
	This means that the approximation $\Ph = \bfZ\bfZ^*$ of the ADI iteration can be alternatively computed based on projections:
	once given a basis $\bfV$ of the Krylov subspace $\mathcal{K}(\bfA,\bfB,\bfsigma)$---this could also be the basis $\bfZ$ of the ADI iteration---the original matrices would have to be projected using an appropriate matrix $\bfW$, and the resulting reduced Lyapunov equation \eqref{eq:S3} then would have to be solved by direct methods.
	
	A possible way to compute a suitable $\bfW$ (the desired $\bfW$ is not unique), is to employ the pole-placement approach in \cite{antoulas_pp}; which, however, would require comparable numerical effort to the calculation of the basis $\bfV$ of the Krylov subspace.
	
	To avoid this, it is also possible to compute the desired reduced matrices $\bfP_q$, $\bfA_q$, $\bfE_q$ and $\bfB_q$ for a given $\bfV$ directly---without explicitly setting up $\bfW$.
	This is done by the \emph{pseudo-optimal rational Krylov} (PORK) algorithm in \cite{Wolf_2013_ECC}.
	It was presented for single inputs $\bfB \in \mathbb{R}^n$ only, but it can also be used for multiple inputs $\bfB \in \mathbb{R}^{n \times m}$ without modifications.
			
	We do not advocate to use the PORK algorithm for computing the approximation of the ADI iteration, due to higher numerical effort.
	However, it provides an interesting link between two different approaches for approximating the solutions of Lyapunov equations.
	This work provides the proves of this link between ADI and RKSM, which was first presented in the talk \cite{wolf_ADI_anif}.
	Furthermore, the link was already used in \cite{Wolf_at}, where the effect of the approximations $\Ph$ from ADI and RKSM on the reduced order model by approximate balanced truncation was investigated.

\subsection{$\H2$ pseudo-optimality of the ADI iteration}\label{sec:ADIH2}
A common way to measure the error in model order reduction is the $\H2$ norm, which is defined for a system \eqref{eq:sys} as
\begin{equation}
	\| \bfG \|_{\H2}^2 := \frac{1}{2\pi} \int_{-\infty}^{+\infty} \trace \left(      \overline{\bfG(j\omega)}\bfG(j\omega)^T \right) d\omega.
\end{equation}
It was shown in \cite{flagg_ADI} that the reduced system associated with the ADI iteration in Theorem~\ref{thm:gugercin} fulfills a so-called $\H2$ pseudo-optimality condition.
However, this pseudo-optimal\-ity is stated only for single inputs $m=1$, and it ``proves harder to extend'' to multiple inputs $m>1$, which is considered as an ``interesting research direction'' in \cite{flagg_ADI}. 
The following theorem identifies the general optimality of the ADI iteration in the sense of the $\H2$ norm.
To the best of the authors' knowledge, this is also the first attempt to generalize $\H2$ pseudo-optimality to block Krylov subspaces.
\begin{theorem}\label{thm:opt}
	Let $\bfC \in \mathbb{R}^{p \times n}$ be an arbitrary output matrix and define the reduced output by $\bfC_q := \bfC\bfV$.
	If the reduced system $\bfG_q(s) = \bfC_q \left( s\bfE_q - \bfA_q \right)^{-1} \bfB_q$ fulfills the conditions of Theorem~\ref{thm:main}, then it is an $\H2$ pseudo-optimal approximation of $\bfG(s)$, i.\,e.~it solves the following minimization problem:
	\begin{equation}\label{eq:min}
		\| \bfG- \bfG_q \|_{\H2} = \underset{\Grs\in \mathcal{T}_{(\bfA_q,\bfE_q)}^{(p,m)}}{\min}
		\| \bfG- \Grs \|_{\H2},
	\end{equation}
	where $\mathcal{T}_{(\bfA_q,\bfE_q)}^{(p,m)}$ is the set of all dynamical systems $\Grs(s) = \Crs \left( s\Ers - \Ars \right)^{-1} \Brs$,
	for which $\Ers^{-1}\Ars \in \mathbb{R}^{q \times q}$ and $\bfE_q^{-1}\bfA_q$ share the same Jordan canonical form, and $\Brs \in \mathbb{R}^{q \times m}$ and $\Crs \in \mathbb{R}^{p \times q}$ are arbitrary.
\end{theorem}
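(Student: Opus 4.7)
The plan is to recognise $\mathcal{T}_{(\bfA_q,\bfE_q)}^{(p,m)}$ as a linear subspace of $\H2$, characterise the minimiser of \eqref{eq:min} as the corresponding orthogonal projection, and then verify that projection condition by invoking the moment-matching property of the rational Krylov subspace. For the first step, because the $\H2$ norm depends only on the transfer function, any $\Grs\in\mathcal{T}$ can first be transformed by a state-space change of coordinates so that $\Ers^{-1}\Ars=\bfE_q^{-1}\bfA_q$; the Jordan form fixed by Theorem~\ref{thm:main}---$k$ simple eigenvalues $\lambda_i=-\sigma_i$, each of algebraic and geometric multiplicity $m$---yields the partial-fraction expansion
\begin{equation*}
\Grs(s)=\sum_{i=1}^{k}\frac{\tilde{\bfR}_i}{s-\lambda_i},\qquad \tilde{\bfR}_i\in\mathbb{R}^{p\times m},
\end{equation*}
and every admissible $k$-tuple $(\tilde{\bfR}_1,\dots,\tilde{\bfR}_k)$ is realised by an appropriate $(\Brs,\Crs)$. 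Hence the set of transfer functions carried by $\mathcal{T}$ is a finite-dimensional real linear subspace $\mathcal{L}\subset\H2$, and $\bfG_q\in\mathcal{L}$.

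The projection theorem then states that $\bfG_q$ solves \eqref{eq:min} iff $\langle\bfG-\bfG_q,\Grs\rangle_{\H2}=0$ for every $\Grs\in\mathcal{L}$. A short residue calculation (or equivalently Parseval and the impulse-response representation) gives, for any strictly proper stable $\bfF$,
\begin{equation*}
\langle\bfF,\Grs\rangle_{\H2}=\sum_{i=1}^{k}\trace\bigl(\tilde{\bfR}_i^{T}\,\bfF(-\lambda_i)\bigr).
\end{equation*}
Varying the $\tilde{\bfR}_i$ independently over $\mathbb{R}^{p\times m}$, this collapses the orthogonality requirement to the tangential interpolation identities $\bfG(-\lambda_i)=\bfG_q(-\lambda_i)$, i.\,e.\ $\bfG(\sigma_i)=\bfG_q(\sigma_i)$, for $i=1,\ldots,k$.

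To finish, I would invoke the classical moment-matching argument for rational-Krylov Petrov--Galerkin projections: since $(\bfA-\sigma_i\bfE)^{-1}\bfB\in\colsp(\bfV)=\mathcal{K}(\bfA,\bfB,\bfsigma)$, the pair $(\bfV,\bfW)$ furnished by Theorem~\ref{thm:main} automatically yields $\bfG_q(\sigma_i)=\bfG(\sigma_i)$ for every $i$; this is precisely where Theorem~\ref{thm:main} carries its weight, because it is the mirror-image condition $\lambda_i=-\sigma_i$ that aligns the orthogonality-forced interpolation points with the shifts already matched by the Krylov construction. The main obstacle I anticipate is the bookkeeping in the first step---verifying that the residue matrices really do sweep out all of $\mathbb{R}^{p\times m}$ so that $\mathcal{L}$ is a genuine linear subspace---together with the extension announced in the Remark, where repeated shifts produce non-trivial Jordan blocks and the residue formula in the second step must be redone with higher-order partial-fraction terms and their derivatives; the orthogonality-equivalent-to-Hermite-interpolation pattern persists, but the matching conditions must be strengthened accordingly.
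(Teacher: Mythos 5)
Your proof is correct, but it follows a genuinely different route from the paper's in the key verification step. Both proofs share the same skeleton: show $\mathcal{T}_{(\bfA_q,\bfE_q)}^{(p,m)}$ is a closed linear subspace (you both do this by passing to the partial-fraction coordinates and checking that the residue matrices fill out all of $\mathbb{R}^{p\times m}$, e.g.\ $\tilde{\bfR}_i=\tilde{\bfR}_i\cdot\bfI_m$), then invoke the Hilbert projection theorem to reduce the claim to $\langle\bfG-\bfG_q,\Grs\rangle_{\H2}=0$ for every $\Grs$. Where you diverge is in verifying this orthogonality. You evaluate the inner product by a residue computation, $\langle\bfF,\Grs\rangle_{\H2}=\sum_i\trace\bigl(\tilde{\bfR}_i^T\bfF(-\lambda_i)\bigr)$, let the residues vary freely, and so convert orthogonality into the matrix interpolation conditions $\bfG(\sigma_i)=\bfG_q(\sigma_i)$; these then come for free from the standard moment-matching property of a Petrov--Galerkin projection whose right space contains $(\bfA-\sigma_i\bfE)^{-1}\bfB$ (and holds for \emph{any} left space $\bfW$). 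The paper instead evaluates the two inner products through the Sylvester equations \eqref{eq:X1}--\eqref{eq:X2}, identifies $\bfX=\bfV\bfP_q$ via the PORK construction and the Sylvester/Krylov duality, deduces $\bfX_1=\bfV\bfP_q\bfT$ and $\bfX_2=\bfP_q\bfT$, and lets the trace terms cancel because $\bfC_q=\bfC\bfV$. Your interpolation route is arguably the more transparent one: it makes explicit that the mirror-image condition of Theorem~\ref{thm:main} is exactly what aligns the orthogonality-forced interpolation nodes $-\lambda_i$ with the shifts $\sigma_i$ that the Krylov subspace already matches. The paper's Sylvester route is more self-contained within the paper's framework (no residue calculus, only Lyapunov/Sylvester algebra and the PORK identity). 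You should still spell out the conjugate-pairing bookkeeping for complex shifts and the Hermite-type extension for repeated shifts, which you flag but do not execute, exactly as the paper does in its two case distinctions; these are straightforward but non-optional for a complete proof.
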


The proof can be found in Appendix~\ref{ap:proof2}.
\begin{rem}
	This theorem shows that computing the reduced system matrices $\bfA_q$, $\bfE_q$, $\bfB_q$ and $\bfC_q$ associated with the ADI iteration (e.\,g. by the PORK algorithm \cite{Wolf_2013_ECC}) yields an $\H2$ pseudo-optimal approximation of the original system \eqref{eq:sys}---irrespective of the choice of shifts and output matrix $\bfC$.
\end{rem}

\subsection{The residual of ADI}
For a given approximate solution $\Ph$, the residual in the Lyapunov equation \eqref{eq:lyap} is defined as
\begin{equation}\label{eq:res}
	\bfR := \bfA \Ph \bfE^T + \bfE \Ph \bfA^T + \bfB\bfB^T.
\end{equation}
It was shown in \cite{flagg_ADI}, that the residual $\bfR$ in the ADI iteration is orthogonal to the Krylov subspace, $\bfR\bfV = \bfnull$, if and only if the conditions of Theorem~\ref{thm:gugercin} are met.
In \cite{Druskin_Convergence}, additionally an explicit formulation of the residual is given, which, however, is inappropriate for numerical computations.
In the following, we present a new explicit formulation of the ADI residual, which is well-suited for numerical computations, easy to implement, and directly includes the above statement on orthogonality.
This formulation was first presented in the talk \cite{wolf_ADI_anif}, and then reworked by the authors of \cite{paed_MCMDS} with a different proof.
\begin{theorem}\label{thm:res}
	Let $\alpha_i := \sqrt{2\,\textrm{Re}(\sigma_{i})}$ and $\bfI$ denote the identity matrix of dimension $m \times m$.
	Then the residual $\bfR$ for the approximation $\Ph = \bfZ\bfZ^*$ with the basis $\bfZ = [\bfZ_1, \, \ldots, \, \bfZ_k ] \in \mathbb{C}^{n \times q}$ of the ADI iteration \eqref{eq:LRADI} is given by
	\begin{equation}
		\bfR = \Bs\Bss,
	\end{equation}
	where $\Bs = \bfB + \bfE\bfZ\bfL_{\text{ADI}}^T$ and $\bfL_{\text{ADI}} := \left[\alpha_1\bfI, \, \ldots,\, \alpha_k\bfI\right]$.
\end{theorem}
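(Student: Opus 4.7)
The plan is to invoke the Sylvester characterisation of the ADI basis from Lemma~\ref{lem:ADI_Syl}, use it to substitute for $\bfA\bfZ$ in the definition \eqref{eq:res} of the residual, and then collect terms so that an outer-product structure becomes visible. Concretely, from $\bfA\bfZ = \bfE\bfZ\bfS_{\text{ADI}} + \bfB\bfL_{\text{ADI}}$ and, after conjugate transposition, $\bfZ^*\bfA^T = \bfS_{\text{ADI}}^*\bfZ^*\bfE^T + \bfL_{\text{ADI}}^T\bfB^T$ (where realness of $\bfA,\bfE,\bfB,\bfL_{\text{ADI}}$ is used), substitution into $\bfR = \bfA\bfZ\bfZ^*\bfE^T + \bfE\bfZ\bfZ^*\bfA^T + \bfB\bfB^T$ eliminates $\bfA$ and yields
\[
\bfR \;=\; \bfE\bfZ\bigl(\bfS_{\text{ADI}}+\bfS_{\text{ADI}}^*\bigr)\bfZ^*\bfE^T + \bfB\bfL_{\text{ADI}}\bfZ^*\bfE^T + \bfE\bfZ\bfL_{\text{ADI}}^T\bfB^T + \bfB\bfB^T.
\]

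The core of the argument is then the algebraic identity $\bfS_{\text{ADI}}+\bfS_{\text{ADI}}^* = \bfL_{\text{ADI}}^T\bfL_{\text{ADI}}$, which I would verify by direct block inspection of \eqref{eq:S_ADI}: the diagonal blocks of the sum are $(\sigma_i+\bar\sigma_i)\bfI = \alpha_i^2\bfI$, while each off-diagonal block equals $\alpha_i\alpha_j\bfI$ (real, since $\alpha_i,\alpha_j > 0$), matching exactly the $(i,j)$-block of the Gram matrix $\bfL_{\text{ADI}}^T\bfL_{\text{ADI}}$. Substituting this identity into the previous display completes the square and collapses the four terms into
\[
\bfR \;=\; \bigl(\bfB + \bfE\bfZ\bfL_{\text{ADI}}^T\bigr)\bigl(\bfB + \bfE\bfZ\bfL_{\text{ADI}}^T\bigr)^{\!*} \;=\; \Bs\Bs^*.
\]

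The main obstacle is the last step: reconciling $\Bs\Bs^*$ with $\Bs\Bss = \Bs\Bs^T$ as stated, which requires $\Bs$ to be real even though $\bfZ$ is in general complex. My plan is to start from the telescoped form $\Bs = \alpha_k^{-1}(\bfA + \bar\sigma_k\bfE)\bfZ_k$ obtained by specialising \eqref{eq:Z_iter2} to $i=k$, and then either (i) pair up conjugate shifts in $\bfsigma$ and argue by induction that the contribution $\alpha_j\bfE\bfZ_j + \alpha_{j'}\bfE\bfZ_{j'}$ of each conjugate pair $(\sigma_j,\sigma_{j'})$ is real, or (ii) repeat the above derivation starting from the real basis $\Zs = \bfZ\Real^{-1}$ together with the real generator pair $(\Real\bfS_{\text{ADI}}\Real^{-1},\,\bfL_{\text{ADI}}\Real^{-1})$ of \cite{real_ADI}, so that $\Bs$ is manifestly real from the outset. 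Either route yields $\bfR = \Bs\Bs^T$; the rank bound $\Rank(\bfR)\le m$ and the orthogonality $\bfR\bfV = \bfnull$ under the conditions of Theorem~\ref{thm:gugercin} then follow immediately from the outer-product form.
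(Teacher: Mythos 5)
Your derivation is essentially the paper's: the paper likewise substitutes the Sylvester relation $\bfA\bfZ = \bfE\bfZ\bfS_{\text{ADI}} + \bfB\bfL_{\text{ADI}}$ into \eqref{eq:res}, groups the result into the block quadratic form
\[
\bfR = \bigl[\bfE\bfZ,\,\bfB\bigr]
\begin{bmatrix}\bfS_{\text{ADI}}+\bfS_{\text{ADI}}^{*} & \bfL_{\text{ADI}}^T \\ \bfL_{\text{ADI}} & \bfI_m\end{bmatrix}
\begin{bmatrix}\bfZ^*\bfE^T\\ \bfB^T\end{bmatrix},
\]
and factors the middle matrix as $\bigl[\bfL_{\text{ADI}}^T;\bfI_m\bigr]\bigl[\bfL_{\text{ADI}},\bfI_m\bigr]$ using the same identity $\bfS_{\text{ADI}}+\bfS_{\text{ADI}}^{*}=\bfL_{\text{ADI}}^T\bfL_{\text{ADI}}$ — your direct expansion and completing-the-square step is the same algebra written out linearly rather than in block form.

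Two small points where you are actually more careful than the printed proof. First, the paper's \eqref{eq:res4} writes $\bfS_{\text{ADI}}+\bfS_{\text{ADI}}^T$, which cannot equal $\bfL_{\text{ADI}}^T\bfL_{\text{ADI}}$ for complex $\sigma_i$ (the diagonal would be $2\sigma_i\bfI$ rather than $\alpha_i^2\bfI$); the correct version is the $\bfS_{\text{ADI}}+\bfS_{\text{ADI}}^{*}$ that you use, consistent with the conjugate transpose appearing in \eqref{eq:res3}. Second, the step from $\Bs\Bs^{*}$ to $\Bs\Bs^T$ genuinely does need $\Bs$ to be real; the paper does not address this inside the proof at all but defers it to the remark immediately after the theorem (``direct computation shows that $\Bs\in\mathbb{R}^{n\times m}$ if each complex shift is used as often as its conjugate''). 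Your two proposed routes — pairing conjugate contributions $\alpha_j\bfE\bfZ_j+\alpha_{j'}\bfE\bfZ_{j'}$, or rederiving everything from the real basis $\Zs=\bfZ\Real^{-1}$ — both work and would make that remark precise; you have correctly identified the one spot the paper glosses over, so no gap relative to the paper's own level of rigor.
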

\begin{proof}
	The residual is given by
	\begin{equation}\label{eq:res2}
		\bfR = \bfA \bfZ\bfZ^* \bfE^T + \bfE \bfZ\bfZ^* \bfA^T + \bfB\bfB^T.
	\end{equation}
	Substituting $\bfA\bfZ$ with the Sylvester equation \eqref{eq:ADISyl} yields
	\begin{equation}\label{eq:res3}
		\bfR = \left(  \bfE\bfZ \bfS_{\text{ADI}} + \bfB \bfL_{\text{ADI}} \right) \bfZ^* \bfE^T + 
		\bfE \bfZ\left(  \bfE\bfZ \bfS_{\text{ADI}} + \bfB \bfL_{\text{ADI}} \right)^* + \bfB\bfB^T,
	\end{equation}
	which can be verified to be equivalent to
	\begin{equation}\label{eq:res4}
		\bfR = \left[ \bfE\bfZ, \, \bfB \right]
		\left[ \begin{array}{cc} \bfS_{\text{ADI}}+\bfS_{\text{ADI}}^T & \bfL_{\text{ADI}}^T \\ \bfL_{\text{ADI}} & \bfI_m \end{array}\right]
		\left[ \begin{array}{c} \bfZ^*\bfE^T \\ \bfB^T \end{array}\right].
	\end{equation}	
	It follows from \eqref{eq:S_ADI}, that $ \bfS_{\text{ADI}}+\bfS_{\text{ADI}}^T = \bfL_{\text{ADI}}^T\bfL_{\text{ADI}}$.
	Therefore,
	\begin{equation}\label{eq:res5}
		\bfR = \left[ \bfE\bfZ, \, \bfB \right]
		\left[ \begin{array}{c} \bfL_{\text{ADI}}^T \\ \bfI_m \end{array}\right]
		\left[  \bfL_{\text{ADI}}, \, \bfI_m \right]
		\left[ \begin{array}{c} \bfZ^*\bfE^T \\ \bfB^T \end{array}\right] = \Bs\Bss,
	\end{equation}		
	which completes the proof.
	\hfill
\end{proof}
\begin{rem}
	Although the basis $\bfZ \in \mathbb{C}^{n \times q}$ is complex for complex shifts, direct computation shows that the residual factor $\Bs = \bfB + \bfE\bfZ\bfL_{\text{ADI}}^T \in \mathbb{R}^{n\times m}$ is real, if in the sequence $\left(\sigma_1, \ldots, \sigma_k \right)$ each complex valued shift $\sigma_i$ is used as often as its complex conjugate.
\end{rem}
\begin{rem}
	The notation $\Bs$ stems from the fact, that the columns of $\Bs$ close the vector chain from the columns of $\bfB$ to their respective projections onto $\bfE\bfV$.
	This means that $\Bs$ is orthogonal to $\colsp(\bfW)$, which defines the direction of projection.
	Therefore, the residual always fulfills a Petrov-Galerkin condition $\bfR\bfW=\bfnull$, and the Galerkin condition $\bfR\bfV=\bfnull$ is met if and only if the conditions of Theorem~\ref{thm:gugercin} hold.
	This shows that the orthogonality conditions of \cite{Druskin_Convergence,flagg_ADI} are directly included in Theorem~\ref{thm:res}.
\end{rem}
\begin{corollary}\label{cor:R_iter}
	Define $\bfB_{\bot,0} := \bfB$ and let the ADI iteration \eqref{eq:LRADI} be augmented by the following iteration
	\begin{equation}\label{eq:Bs_iter}
		\bfB_{\bot,i} = \bfB_{\bot,i-1} + \sqrt{2\textrm{Re}(\sigma_i)} \bfE\bfZ_i, \qquad i = 1,2,\ldots,l.
	\end{equation}
	At an arbitrary step $1 \le j \le l$ in the ADI iteration, the Lyapunov approximation is given by $\Ph = \bfZ_j\bfZ_j^*$ with $\bfZ_j = [\bfZ_1, \, \ldots, \, \bfZ_j ]$.
	Then, the residual \eqref{eq:res} is given by $\bfR =  \bfB_{\bot,j}\bfB_{\bot,j}^T$.
\end{corollary}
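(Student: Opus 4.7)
The plan is to recognize that Corollary~\ref{cor:R_iter} is essentially a reformulation of Theorem~\ref{thm:res} combined with a telescoping observation, and to reduce it directly to the already-proved theorem.

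First I would note the key structural fact that the low-rank ADI recursion~\eqref{eq:LRADI} is ``causal'': the block $\bfZ_i$ depends only on shifts $\sigma_1,\ldots,\sigma_i$. Consequently, truncating the iteration after step $j$ produces exactly the ADI basis $\bfZ_j = [\bfZ_1,\ldots,\bfZ_j]$ that would have been obtained by running \eqref{eq:LRADI} from scratch with the shift set $\{\sigma_1,\ldots,\sigma_j\}$. In particular, $\Ph = \bfZ_j \bfZ_j^*$ is a bona fide $j$-step ADI approximation, so Theorem~\ref{thm:res} applies verbatim with $k$ replaced by $j$, yielding $\bfR = \Bs^{(j)} \Bs^{(j)T}$ where $\Bs^{(j)} := \bfB + \bfE \bfZ_j \bfL_{\text{ADI}}^{(j),T}$ and $\bfL_{\text{ADI}}^{(j)} = [\alpha_1\bfI,\ldots,\alpha_j\bfI]$.

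Second, I would expand the product $\bfE \bfZ_j \bfL_{\text{ADI}}^{(j),T} = \sum_{i=1}^{j} \alpha_i \bfE \bfZ_i = \sum_{i=1}^{j} \sqrt{2\,\textrm{Re}(\sigma_i)}\, \bfE \bfZ_i$, so that $\Bs^{(j)} = \bfB + \sum_{i=1}^{j} \sqrt{2\,\textrm{Re}(\sigma_i)}\, \bfE \bfZ_i$. Then a trivial induction (or telescoping) on the recursion~\eqref{eq:Bs_iter} gives $\bfB_{\bot,j} = \bfB_{\bot,0} + \sum_{i=1}^{j} \sqrt{2\,\textrm{Re}(\sigma_i)}\, \bfE\bfZ_i = \bfB + \sum_{i=1}^{j} \sqrt{2\,\textrm{Re}(\sigma_i)}\, \bfE \bfZ_i$, so $\bfB_{\bot,j} = \Bs^{(j)}$ and therefore $\bfR = \bfB_{\bot,j}\bfB_{\bot,j}^{T}$, which is the claim.

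There is essentially no hard step: the only point that requires a moment's thought is the observation in the first paragraph that stopping the ADI iteration at step $j$ is itself an ADI iteration of length $j$, so that Theorem~\ref{thm:res} can be invoked without rerunning the residual computation from scratch. Everything else is a one-line telescoping identity and a matrix-product rewrite. The corollary is therefore best presented as a short remark/consequence rather than as a separate self-contained proof.
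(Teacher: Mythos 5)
Your proof is correct and matches the paper's approach exactly: the paper simply asserts the corollary ``directly follows from Theorem~\ref{thm:res},'' and your write-up supplies the two routine observations that make this precise (causality of the ADI recursion, and the telescoping identity $\bfB_{\bot,j} = \bfB + \sum_{i=1}^{j}\alpha_i\bfE\bfZ_i = \Bs^{(j)}$).
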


The corollary directly follows from Theorem~\ref{thm:res} and shows that the formulation of the residual is well-suited for the iterative ADI procedure.
It further shows that the rank of the residual $\bfR$ is independent of the dimension $q$ of the ADI basis $\bfZ$: $\Rank(\bfR)\le m$.
The norm of the residual is often used as a convergence criterion in the ADI iteration.
A typical implementation is to approximate the Euclidean norm $\| \bfR \|_2$ via a power method, see \cite{diss_sabino}.
The new formulation here allows a fast computation of the Euclidean norm $\| \bfR \|_2$ by an $m\times m$ matrix:
$\| \bfR \|_2 \!=\! \| \Bs\Bss \|_2 \!=\! \| \Bss\Bs \|_2$.
For small $m$, the residual norm $\| \bfR \|_2$ can be calculated with negligible numerical effort---compared to the computation of the ADI basis $\bfZ$.
Therefore, with the new formulation from Corollary~\ref{cor:R_iter}, the norm $\| \bfR \|_2$ provides a fast-to-evaluate convergence criterion for the ADI iteration.

\subsection{An estimator of optimality of shifts}
It follows from Theorem~\ref{thm:main}, that the ADI iteration is generally associated with an oblique projection.
In contrast, RKSM is usually employed with an orthogonal projection---at least in the available literature.
It is interesting to investigate the case, when the ADI iteration gets related to an orthogonal projection, i.\,e. when both ADI and RKSM with $\bfW = \bfV$ yield the same approximate solution of the Lyapunov equation.

On the one hand, orthogonal projections are more favorable than oblique ones, due to better numerical behavior and advantages in stability preservation.
On the other hand, we showed that the oblique projection, that the ADI iteration is associated with, always fulfills a certain $\H2$ (pseudo-) optimality.
Hence, if the ADI iteration can be characterized by an orthogonal projection, both advantages are combined, 
and as shown in \cite{Breiten_ADI}, the error in ``the naturally induced energy norm of the corresponding linear operator of the Lyapunov equation'' is minimized in this case for symmetric systems.

Therefore, a set of shifts $\bfsigma$, such that ADI is associated with an orthogonal projection, can be considered optimal in some sense.
However, such a set is usually previously unknown and only an iterative algorithm can be stated (hopefully converging to such a set), which is often not computationally practical for large-scale systems.

With the results of this work, we cannot give a better algorithm to compute such a set, but at least we can give an a posteriori measure of the ``obliqueness'' of the projection that the ADI iteration is associated with.
As shown in the end of the section, this measure can then be used as an estimator of the quality of approximation.

To derive the measure, consider the matrices $[\bfA_q,\bfE_q,\bfB_q] = \bfW^T[\bfA\bfV,\bfE\bfV,\bfB]$.
Because $\bfV$ spans a rational Krylov subspace, $\colsp(\bfA\bfV) \subseteq \colsp([\bfE\bfV,\bfB])$.
This shows that only the part of $\bfW$ in the subspace $\colsp([\bfE\bfV,\bfB])$ is relevant for projection.
To state a unique measure, we therefore have to restrict ourselves to this subspace in the following, and choose $\bfW$ in the $(q+m)$-dimensional subspace given by $\colsp([\bfE\bfV,\bfB])$.
By decomposing the subspace $\colsp([\bfE\bfV,\bfB])$ into $\colsp(\bfW)$ and its orthogonal complement $\mathcal{W}^{\bot} := \{ \bfy \in \colsp([\bfE\bfV,\bfB]): \bfy^T\bfz = 0, \forall \bfz \in \colsp(\bfW) \}$, we find that $\colsp(\Bs) \subseteq \mathcal{W}^{\bot}$.
The orthogonal complement of $\colsp(\bfE\bfV)$ is defined as $\mathcal{V}^{\bot} := \{ \bfy \in \colsp([\bfE\bfV,\bfB]): \bfy^T\bfz = 0, \forall \bfz \in \colsp(\bfE\bfV) \}$, and a basis of this subspace can be computed as $\Bsv := \bfB - \bfE\bfV(\bfV^T\bfE\bfV)^{-1}\bfV^T\bfB$.
Now the angle $\theta$ between the orthogonal projection by $\mathcal{V} := \colsp(\bfV)$ and the oblique projection by $\mathcal{W} := \colsp(\bfW)$ onto $\colsp(\bfE\bfV)$ is given by the angle between the subspaces $\mathcal{V}^{\bot}$ and $\mathcal{W}^{\bot}$, or equivalently: the angle between the subspaces $\colsp(\Bsv)$ and $\colsp(\Bs)$.

The angle $\theta$ between the subspaces spanned by two matrices $\Bsv$ and $\Bs$ can be easily computed, e.\,g. in \textsc{MATLAB} with the command \texttt{subspace}.
Although this command is implemented for dense matrices, it can be easily implemented to also work for sparse matrices.
Assume that the ADI iteration \eqref{eq:LRADI} is implemented in \textsc{MATLAB} to compute the basis $\bfZ$, together with the computation of $\Bs$ by \eqref{eq:Bs_iter} (denoted as ``\texttt{Bp}'') for a given set of shifts $\bfsigma$.
Then a possible implementation for computing $\Bsv$ (denoted as ``\texttt{Bp\_EV}'') and the angle $\theta$ is:

\quad \parbox{0.9\columnwidth}
{\texttt{%
Bp\_EV = E*Z*( (Z'*E*Z)$\backslash$(Z'*B) );  \\
theta  = subspace(Bp\_EV,Bp);
}}

The smaller $\theta$ is, the closer the set $\bfsigma$ is to an $\H2$ pseudo-optimal set.
Please note, that this measure is not directly related to the approximation error $\|\bfP- \Ph \|$:
if $\theta$ is close to zero, one can expect $\Ph$ to be a good approximation of $\bfP$ for a certain rank of $\Ph$;
if $\theta$ is large, say close to $\pi\slash 2$, one cannot conclude that $\Ph$ is a bad approximation.
Especially in the typical setting, where a predetermined set of shifts $\bfsigma$ is cyclically reused until convergence occurs, it is very likely that $\theta$ is large.

To demonstrate this, we consider a short numerical example:
a semi-discretized heat transfer problem for optimal cooling of steel profiles from the Oberwolfach model reduction benchmark collection\footnote{Available at http://portal.uni-freiburg.de/imteksimulation/downloads/benchmark.}.
The order is $n\!\!=\!\!1,\!357$ so that $\bfP$ can be computed by direct methods for comparison.
We consider only the first input: $m\!\!=\!\!1$.

In order to find a set $\bfsigma$ that fulfills the conditions of Theorem~\ref{thm:gugercin} we used the \emph{iterative rational Krylov algorithm} (IRKA) \cite{H2_gugercin} in its one-sided version, i.\,e. with $\bfW\!=\!\bfV$.
We set the reduced order to $q\!=\!4$ and chose an initial set $\bfsigma \!=\! \{ 100,100,100,100 \}$.
After every iteration of IRKA, we computed the ADI basis $\bfZ$ for the resulting set of shifts $\bfsigma$, and also $\theta$ as proposed above.
Figure~\ref{fig:theta_over_IRKA} shows that $\theta$ tends to zero, which shows that IRKA indeed converges to a set $\bfsigma$ such that ADI is associated with an orthogonal projection.
The convergence of IRKA can also be concluded from Figure~\ref{fig:rel_norm_P_Ph_over_IRKA}, which shows that the relative error $\|\mathbf{P}\!-\! \widehat{\mathbf{P}} \|_2 / \|\mathbf{P} \|_2$ converges to a constant value.
\newlength{\maxwidth}
\setlength{\maxwidth}{0.48\columnwidth}
\newlength{\myheight}
\setlength{\myheight}{0.26\columnwidth}
\begin{figure}[!htb]
	\footnotesize
	\centering
%	\tikzsetnextfilename{theta_over_IRKA}
	\subfigure[]{
	\includegraphics[width=\maxwidth,height=\myheight]{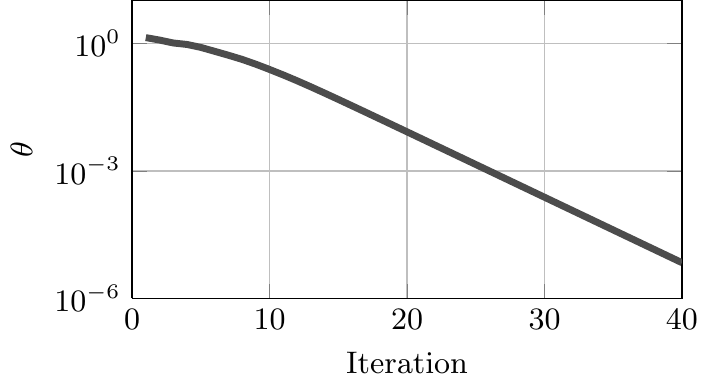}
	\label{fig:theta_over_IRKA}
	}\hfill
%	\tikzsetnextfilename{rel_norm_P_Ph_over_IRKA}
	\subfigure[]{
	\includegraphics[width=\maxwidth,height=\myheight]{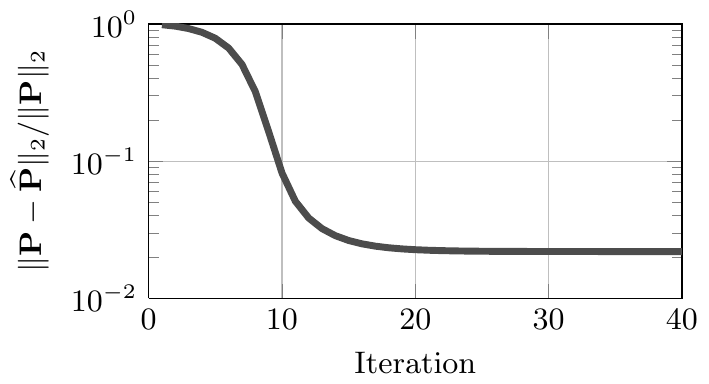}
	\label{fig:rel_norm_P_Ph_over_IRKA}
	}
	\caption{Convergence of IRKA to an optimal set $\bfsigma$, where  $n = 1357$, $q = 4$.}
\end{figure}

Although IRKA converges, which means that an $\H2$ pseudo-optimal set $\bfsigma$ is found, the approximation might not be sufficient.
This is due to the reduced order $q\!=\!4$, which is too small in this example to sufficiently approximate $\bfP$.
We therefore took the resulting set $\bfsigma \!=\! \{ \sigma_1,\ldots,\sigma_4 \}$ after $40$ iterations of IRKA and cyclically reused this set in the ADI iteration.
Figure~\ref{fig:rel_norm_P_Ph_over_cyclic_ADI} shows that the approximation error then tends to zero, and that $\bfP$ is approximated by $\Ph$ of rank $q\!=\!120$ with a relative error of $9.8 \cdot 10^{-10}$.
However, by reusing the set $\bfsigma$, it is not optimal anymore.
This can be concluded from Figure~\ref{fig:theta_over_cyclic_ADI}, which shows that $\theta$ rapidly tends to its maximum possible value $\pi/2$.
\begin{figure}[!htb]
	\footnotesize
	\centering
%	\tikzsetnextfilename{theta_over_cyclic_ADI}
	\subfigure[]{
	\includegraphics[width=\maxwidth,height=\myheight]{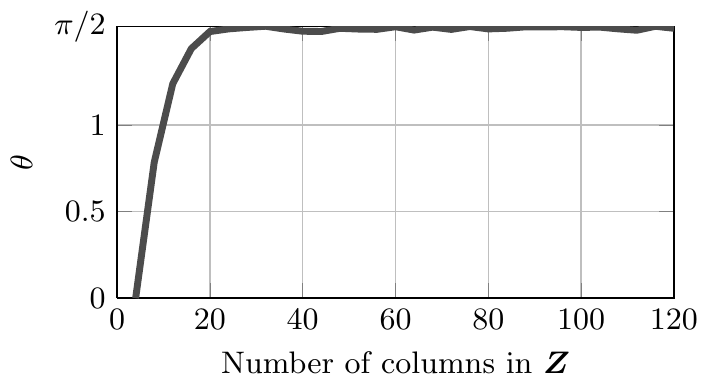}
	\label{fig:theta_over_cyclic_ADI}
	}\hfill
%	\tikzsetnextfilename{rel_norm_P_Ph_over_cyclic_ADI}
	\subfigure[]{
	\includegraphics[width=\maxwidth,height=\myheight]{theta_over_cyclic_ADI.pdf}
	\label{fig:rel_norm_P_Ph_over_cyclic_ADI}
	}
	\caption{ADI iteration for cyclically reusing the set $\bfsigma \!=\! \{ \sigma_1,\ldots,\sigma_4 \}$ from IRKA, where $n = 1357$.}
\end{figure}

This shows on the one hand, that $\theta$ close to zero indicates an optimal set for the respective reduced order, and on the other hand, that cyclically reusing shifts is far from optimal.
This, however, does not mean that the approximation has to be bad.
It only indicates, that for the respective reduced order a smaller error in $\bfP\!-\!\Ph$ should be possible; or equivalently, that the respective error in $\bfP\!-\!\Ph$ should also be reachable with a smaller reduced order.
Finding this better approximation, however, would require much higher numerical effort---and is a topic of current research.
To sum up, $\theta$ provides an estimator of the error $\bfP\!-\!\Ph$ for a given reduced order: it is sufficient but not necessary for a good approximation.

%%%%%%%%%%%%%%%%%%%%%% SECTION %%%%%%%%%%%%%%%%%%%%%%%%%
\section{Conclusions}
We have shown, that the ADI iteration for solving Lyapunov equations can always be interpreted as an oblique version of RKSM.
The results are based on a newly introduced Sylvester equation for the ADI basis, which facilitates the study of the ADI iteration.
The link to RKSM allows to associate a unique reduced order model to the ADI iteration which can be easily computed by the PORK algorithm.
The reduced model associated to the ADI iteration was proven to be an $\H2$ pseudo-optimal approximation of the original model with an arbitrary output.
This also generalizes previous results on $\H2$ pseudo-optimality to multivariable systems that are reduced by rational block Krylov subspaces.
Furthermore, a low-rank formulation of the Lyapunov residual is presented, which is well-suited for computation in the ADI iteration.
This allows to measure the ``obliqueness'' of the projection that the ADI iteration is related to---with negligible numerical effort.

\appendix
%\renewcommand{\appendixname}{Appendix}
%\appendixtitleoff

%\section{Appendix}

% % % % % % % % % % % % % % % % % % % % % % % % % % % % % % % % % % % % % % % % % % % % %
\section{Proof of Theorem~\ref{thm:opt}}\label{ap:proof2}
Let $\H2^{(p,m)}$ denote the set of all asymptotically stable systems \eqref{eq:sys} with $m$ inputs and $p$ outputs, which is a Hilbert space, \cite{Gug_H_approx}. 
The key to the proof is the observation that the set $\mathcal{T}_{(\bfA_q,\bfE_q)}^{(p,m)}$ is a closed subspace of $\H2^{(p,m)}$.
This follows from the fact that the sum of two systems in $\mathcal{T}_{(\bfA_q,\bfE_q)}^{(p,m)}$ stays in $\mathcal{T}_{(\bfA_q,\bfE_q)}^{(p,m)}$.
Assume for the moment one real shift $\sigma_1 \in \mathbb{R}$ which is reused $m_1$ times.
Due to the PORK algorithm in \cite{Wolf_2013_ECC}, the Jordan canonical form of $\bfE_q^{-1}\bfA_q \in \mathbb{R}^{(m_1 m) \times (m_1 m)}$ that fulfills the conditions of Theorem~\ref{thm:main} can be written with $-\sigma_1\bfI$ on the diagonal and $\bfI$ on the upper diagonal, where $\bfI$ denotes the identity matrix of dimension $m \times m$.
Then, without loss of generality, any system $\Grs(s)$ in $\mathcal{T}_{(\bfA_q,\bfE_q)}^{(p,m)}$ can be written as
\begin{align}
	\Grs(s) &:= \Crs \left(s\bfI- \bfE_q^{-1}\bfA_q \right)^{-1} \Brs = \\
	&= \Crs \left[\begin{array}{cccc}
	\frac{1}{s+\sigma_1}\bfI & \frac{1}{(s+\sigma_1)^2}\bfI & \cdots & \frac{1}{(s+\sigma_1)^{m_1}}\bfI \\ 
	& \ddots & \ddots & \vdots \\ & & \ddots & \frac{1}{(s+\sigma_1)^2}\bfI \\ & & & \frac{1}{s+\sigma_1}\bfI 
	\end{array} \right]\Brs = \sum_{i=1}^{m_1} \frac{\Theta_i}{(s+\sigma_1)^i},\label{eq:H(s)}
\end{align}
with the residuals $\Theta_i \in \mathbb{R}^{q \times m}$,
\begin{equation}\label{eq:Theta}
	\Theta_i = \sum_{j=1}^{m_1-i+1} \Csn_j \Bsn_{j+i-1}.
\end{equation}
where $\Brs := \left[\Bsn_1^T, \ldots, \Bsn_{m_1}^T \right]^T$ and $\Crs := \left[\Csn_1, \ldots, \Csn_{m_1} \right]$ are arbitrary.
The maximum possible rank of $\Theta_i$ is $\min(q,m)$.
Due to $\Bsn_{i} \in \mathbb{R}^{m \times m}$ and $\Csn_{i} \in \mathbb{R}^{p \times m}$ are arbitrary, residuals $\Theta_i$ of maximum rank are included in $\mathcal{T}_{(\bfA_q,\bfE_q)}^{(p,m)}$.
Therefore, the sum of two systems \eqref{eq:H(s)} stays in $\mathcal{T}_{(\bfA_q,\bfE_q)}^{(p,m)}$.	

Now assume a complex conjugated pair of shifts $\sigma_1, \bar{\sigma}_1 \in \mathbb{C}$ that are both used $m_1$ times.
Then $\Grs(s)$ becomes
\begin{equation}\label{eq:H(s)_c}
	\Grs(s) = \sum_{i=1}^{m_1} \frac{\Theta_i}{(s+\sigma_1)^i} +
 \sum_{i=1}^{m_1} \frac{\bar{\Theta}_i}{(s+\bar{\sigma}_1)^i},
\end{equation}
with complex residuals $\Theta_i \in \mathbb{C}^{q \times m}$.
Here again, the residuals $\Theta_i$ of maximum rank are included in $\mathcal{T}_{(\bfA_q,\bfE_q)}^{(p,m)}$; additionally, in the sum of two systems \eqref{eq:H(s)_c} the residuals stay complex conjugated, i.\,e. the sum of two systems \eqref{eq:H(s)_c} stay in $\mathcal{T}_{(\bfA_q,\bfE_q)}^{(p,m)}$.
%which means that the sum of two dynamical systems \eqref{eq:H(s)_c} stays real.

If we assume arbitrary sets $\bfsigma$ with complex conjugated shifts of equal multiplicity, the different eigenvalues $-\sigma_i$ are decoupled in the Jordan canonical form of $\bfE_q^{-1}\bfA_q$.
Therefore, the above conclusions follow for each $\sigma_i$, which proves that $\mathcal{T}_{(\bfA_q,\bfE_q)}^{(p,m)}$ is a subspace.

Because $\mathcal{T}_{(\bfA_q,\bfE_q)}^{(p,m)}$ is a closed subspace of $\H2^{(p,m)}$, we can apply the Hilbert projection theorem to prove Theorem~\ref{thm:opt}.
With the $\H2$-inner product, defined for two systems $\bfG(s)$ and $\bfH(s)$ in $\H2^{(p,m)}$ as
\begin{equation}
	\left\langle \bfG, \bfH \right\rangle_{\H2^{(p,m)}} :=
	\frac{1}{2\pi} \int_{-\infty}^{\infty} \trace \left( \overline{\bfG(j\omega)}\bfH(j\omega)^T \right) d\omega,
\end{equation}
the Hilbert projection theorem states that $\bfG_q(s)$ is the minimizer of \eqref{eq:min} if and only if for all $\Grs(s)$ from $\mathcal{T}_{(\bfA_q,\bfE_q)}^{(p,m)}$:
\begin{equation}\label{eq:orth}
	\left\langle \bfG-\bfG_q, \Grs \right\rangle_{\H2^{(p,m)}} =
	\left\langle \bfG, \Grs \right\rangle_{\H2^{(p,m)}} -
	\left\langle \bfG_q, \Grs \right\rangle_{\H2^{(p,m)}} = 0
\end{equation}
The $\H2^{(p,m)}$-inner products in \eqref{eq:orth} can be computed by
\begin{align}
	\left\langle \bfG, \Grs \right\rangle_{\H2^{(p,m)}} &= \trace\left(\bfC\bfX_1\Crs^T\right), \qquad \textrm{and} \\
	\left\langle \bfG_q, \Grs \right\rangle_{\H2^{(p,m)}} &= \trace\left(\bfC_q\bfX_2\Crs^T\right), 
\end{align}
where $\bfX_1$ and $\bfX_2$ are the unique solutions of 
\begin{align}
	\bfA\bfX_1\bfE_q^T + \bfE\bfX_1\bfA_q^T + \bfB\Brs^T &= \bfnull, \label{eq:X1} \\
	\bfA_q\bfX_2\bfE_q^T + \bfE_q\bfX_2\bfA_q^T + \bfB_q\Brs^T &= \bfnull, \label{eq:X2}
\end{align}
see \cite{H2_gugercin}.
Let $\bfX$ be the solution of the Sylvester equation
\begin{equation}
	\bfA\bfX\bfE_q^T + \bfE\bfX\bfA_q^T + \bfB\bfB_q^T = \bfnull. \label{eq:X}
\end{equation}
Due to the duality of Krylov subspaces and Sylvester equations \cite{Gallivan_2004,Wolf_2012_MathMod}, \eqref{eq:X} and \eqref{eq:X1} can be interpreted in such way, that their solutions $\bfX$ and $\bfX_1$  span the rational input block Krylov subspace $\mathcal{K}(\bfA,\bfB,\bfsigma)$.
As shown in the proof of Theorem~\ref{thm:main}, $\bfG_q(s)$ is controllable and therefore, $\Rank(\bfX)=q$.
For this reason, $\colsp(\bfX_1)$ has to be contained in $\colsp(\bfX)$: $\colsp(\bfX_1)\subseteq \colsp(\bfX)$.
Thus, there exists a matrix $\bfT \in \mathbb{R}^{q \times q}$ (which is singular if $\Grs(s)$ is not controllable), such that $\bfX_1 = \bfX\bfT$.
From the PORK algorithm, we know that $\bfX = \bfV\bfP_q$, and we can substitute $\bfX_1 = \bfX\bfT = \bfV\bfP_q\bfT$ in \eqref{eq:X1}.
From Theorem~\ref{thm:main}, we know there exists a matrix $\bfW$, such that $\bfW^T\bfA\bfV = \bfA_q$, $\bfW^T\bfE\bfV = \bfE_q$ and $\bfW^T\bfB = \bfB_q$.
Multiplying \eqref{eq:X1} with this $\bfW^T$ from the left yields
\begin{equation}\label{eq:PqT}
	\bfA_q\bfP_q\bfT\bfE_q^T + \bfE_q\bfP_q\bfT\bfA_q^T + \bfB_q\bfB_H^T = \bfnull.
\end{equation}
As the solutions of \eqref{eq:X2} and \eqref{eq:PqT} are unique, we can identify $\bfX_2 = \bfP_q\bfT$.
Using this for \eqref{eq:orth} leads to
\begin{align}
	\left\langle \bfG-\bfG_q, \Grs \right\rangle_{\H2^{(p,m)}} &= 
	\trace\left(\bfC\bfX_1\Crs^T\right) - \trace\left(\bfC_q\bfX_2\Crs^T\right) \\
	&= \trace\left(\bfC\bfV\bfP_q\bfT\Crs^T\right) - \trace\left(\bfC_q\bfP_q\bfT\Crs^T\right) \\
	&= \trace\left(\bfC_q\bfP_q\bfT\Crs^T - \bfC_q\bfP_q\bfT\Crs^T\right) \qquad\qquad= \bfnull,
\end{align}
which completes the proof.

%%%%%%%%%%%%%%%%%%%%%%% SECTION %%%%%%%%%%%%%%%%%%%%%%%%%
\section{Acknowledgments}
The authors thank Prof. Serkan Gugercin for the fruitful discussion at the MODRED 2013 in Magdeburg.

\bibliographystyle{plainnat}
\bibliography{ref}

\end{document}